\newcommand{\ms}{\medskip}
\def\Q{{\mathbb Q}}
\def\Z{{\mathbb Z}}
\def\R{{\mathbb R}}
\def\C{{\mathbb C}}
\def\P{{\mathbb P}}
\def\Gal{\mathrm{Gal}}
\def\Hom{\mathrm{Hom}}
\def\Mat{\mathrm{Mat}}
\def\Sin{\mathrm{Sin}}
\def\Spec{\mathrm{Spec}}
\def\pro{\mathrm{pro}}
\def\Map{\mathrm{Map}}
\def\I{{I}}
\def\G{{\mathbb G}}
\def\s{{\mathscr S}}
\def\t{{\mathcal T}}
\def\HH{{\mathfrak H}}
\def\BB{{\mathrm B}}
\def\Nn{{\mathcal N}}
\def\Isom{\mathrm{Isom}}
\def\sk{\mathrm{sk}}
\def\cosk{\mathrm{cosk}}
\def\SL{\mathrm{SL}}
\def\SP{\mathrm{Sp}}
\def\CSP{\mathrm{CSp}}
\def\IM{\mathrm{Im}}
\def\Pic{\mathrm{Pic}}
\def\RR{\mathrm{R}}
\def\O{{\mathcal O}}
\def\P{{P}}
\def\hf{{\mathfrak S}}
\def\vf{{\mathfrak h}}
\def\P{{\mathfrak P}}
\def\i{{\mathfrak i}}
\def\c{{\mathfrak c}}
\def\T{{\mathfrak T}}
\def\E{{\mathfrak E}}
\def\m{{\mathscr M}}
\def\a{{\mathscr A}}
\def\h{{\mathcal H}}
\def\O{{\mathcal O}}
\def\I{{\mathcal I}}
\def\r{{\mathcal R}}
\def\l{{\mathcal L}}
\def\e{{\}_{et}}}
\theoremstyle{definition}
\newtheorem{theorem}{Theorem}[section]
\newtheorem*{itheo}{Theorem}
\newtheorem{corollary}[theorem]{Corollary}
\newtheorem{definition}[theorem]{Definition}
\theoremstyle{remark}
\numberwithin{equation}{section}
\begin{document}

\title[\'Etale Homotopy of Moduli of abelian schemes]
{\'Etale Homotopy Types of Moduli Stacks of polarised abelian schemes}

\author[Paola Frediani]
{Paola Frediani}
\address{Dipartimento di Matematica\\
Universit\`a degli Studi di Pavia\\
Via Ferrata 1, I-27100 Pavia, Italy}
\email{frediani@dimat.unipv.it}

\author[Frank Neumann]
{Frank Neumann}
\address{Department of Mathematics\\
University of Leicester\\
University Road, Leicester LE1 7RH, England, UK}
\email{fn8@le.ac.uk}

\subjclass[2000]{14F35, 14K10, 14H10, 14C34}


\keywords{\'etale homotopy theory, algebraic stacks, moduli of
abelian schemes, principally polarised abelian varieties, algebraic curves\\
{\it Dedicated to Professor Ronald Brown on the occasion of his 80th birthday and to the memory of Alexander Grothendieck.}}

\begin{abstract}
{We determine the Artin-Mazur \'etale homotopy types of moduli stacks of polarised
abelian schemes using transcendental methods and derive some arithmetic properties 
of the \'etale fundamental groups of these moduli stacks. Finally we analyse the Torelli morphism
between the moduli stacks of algebraic curves and principally
polarised abelian schemes from an \'etale homotopy point of view.}
\end{abstract}

\maketitle



\section*{Introduction}

The use of the Artin-Mazur machinery of \'etale homotopy theory \cite{AM} for Deligne-Mumford stacks was pioneered by Oda \cite{O} following ideas of Grothendieck \cite{G} in order to study arithmetic homotopy types of moduli stacks of algebraic curves and to relate them to geometric representations of the absolute Galois group $\Gal(\bar{\Q}/ \Q)$.
Let $\m_{g,n}$  be the moduli stack of smooth algebraic curves of genus $g$ with $n$ distinct ordered points and $2g+ n > 2$. 
Oda determined the Artin-Mazur  \'etale homotopy type of the moduli stack $\m_{g, n} \otimes \bar{\Q}$ over the big \'etale site of schemes over $\bar{\Q}$. Using transcendental methods by comparing it with the complex analytic situation Oda showed that the \'etale homotopy type of $\m_{g, n} \otimes \bar{\Q}$ is given as the profinite Artin-Mazur completion of the Eilenberg-MacLane space $K(\Map_{g, n}, 1)$, where $\Map_{g, n}$ is the Teichm\"uller modular or mapping class group of compact Riemann surfaces of genus $g$ with $n$ punctures. Oda's results allow to analyse geometric Galois actions as they give rise to a short exact sequence relating the absolute Galois group $\Gal(\bar{\Q}/\Q)$, the \'etale fundamental group  $\pi^{et}_{1}(\m_{g, n} \otimes \Q, x)$ of the moduli stack of algebraic curves over the rationals $\Q$ and the profinite completion $\Map_{g,n}^{\wedge}$ of the mapping class group $\Map_{g,n}$ (cf. \cite{Ma}, \cite{SL}, \cite{Mk}).

In \cite{fn} the authors followed up on the theme of Oda and determined the \'etale homotopy types of related moduli stacks of algebraic curves with prescribed symmetries including moduli stacks of hyperelliptic curves. Later Ebert and Giansiracusa in \cite{EG} determined also the \'etale homotopy types of the Deligne-Mumford compactifications  $\bar{\m}_{g,n}$ being again Deligne-Mumford stacks which classify stable algebraic curves of genus $g$ with $n$ distinct ordered points and which are compactifications of the moduli stacks $\m_{g,n}$.

In this article we will now analyse \'etale homotopy types of moduli stacks $\a_D$ of abelian schemes with polarisations of a general given type $D$ and related moduli stacks $\a_{D, [N]}$ of polarised abelian schemes with level structures. Special cases include the moduli stack $\a_g$ of principally polarised abelian varieties and the moduli stack $\m_{ell}$ of elliptic curves. Moduli stacks of abelian schemes and their compactifications play a fundamental role in algebraic geometry and number theory. 

It turns out that a similar theorem like that of Oda for the moduli stacks of algebraic curves holds with some modifications also for the moduli stacks $\a_{D}$ and $\a_{D, [N]}$ of polarised abelian schemes with and without level structures. 

More precisely, let $g\geq 1$ be a positive integer and $D=(d_1, d_2, \ldots d_g)$ be a tuple of integers such that $d_1|d_2|\ldots |d_g$.
The moduli stack $\a_D$ of abelian schemes with polarisations of type $D$ is known to be an algebraic Deligne-Mumford stack.  It has also a complex analytic uniformisation $\a_D^{an}$ given as a complex analytic Deligne-Mumford quotient stack or complex analytic orbifold of the form $\a^{an}_D=[\HH_g/\SP_D(\Z)]$. Here $\HH_g$ is the Siegel upper half space of genus $g$, which carries a natural action of the discrete group  $\SP_D(\Z) = \{\gamma\in \Mat(2g, \Z) | {}^t\gamma J_D\gamma = J_D\} $, where the matrix $J_D$ is given as
$$J_D =\left(
\begin{array}{cc}
0 & D \\
  &  \\
-D & 0\\
\end{array}
\right),
$$ 
and $D$ is the diagonal matrix with entries  $(d_1,...,d_g)$

In order to determine the \'etale homotopy type $\{\a_D\otimes \bar{\Q}\e$ we compare the algebraic with the associated complex analytic situation.  As the Siegel upper half space $\HH_g$  is a contractible topological space this allows to determine
the classical homotopy type of the stack $\a_D^{an}$ and using a general comparison theorem comparing \'etale
and classical homotopy types essentially due to Artin-Mazur \cite{AM} we can derive our main result

\begin{itheo}
\label{etho}
Let $g\geq 1$ be a positive integer and $D=(d_1, d_2, \ldots d_g)$ be a tuple of integers with $d_1|d_2|\ldots |d_g$. 
There is a weak homotopy equivalence of pro-simplicial sets
$$\{\a_D \otimes\bar{\Q} \}^{\wedge}_{et} \simeq K(\SP_D (\Z), 1)^{\wedge}.$$
where $^{\wedge}$ denotes Artin-Mazur profinite completion.
In particular for principal polarisations we have
$$\{\a_g \otimes\bar{\Q} \}^{\wedge}_{et} \simeq K(\SP (2g, \Z), 1)^{\wedge}.$$
\end{itheo}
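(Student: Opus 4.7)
My strategy mirrors Oda's three-step template for $\m_{g,n}$: (i) identify the classical homotopy type of the analytic uniformisation, (ii) transport this across an Artin-Mazur style comparison to obtain the \'etale homotopy type over $\C$, and (iii) descend to $\bar{\Q}$ using invariance under change of algebraically closed base field in characteristic zero.

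For step (i), the presentation $\a_D^{an} = [\HH_g/\SP_D(\Z)]$ realises the analytic stack as the quotient of a contractible space by a properly discontinuous action with finite isotropy (which is exactly what secures the Deligne-Mumford property). The nerve of the translation groupoid $\SP_D(\Z) \times \HH_g \rightrightarrows \HH_g$ has geometric realisation weakly equivalent to the Borel construction $E\SP_D(\Z) \times_{\SP_D(\Z)} \HH_g$, and contractibility of $\HH_g$ collapses this to $B\SP_D(\Z) = K(\SP_D(\Z), 1)$. Hence $\a_D^{an}$ has classical homotopy type $K(\SP_D(\Z), 1)$.

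For step (ii), I would invoke the Artin-Mazur comparison theorem in the form extended to Deligne-Mumford stacks as used by Oda in \cite{O} and the authors in \cite{fn}: for a smooth DM stack $\X$ of finite type over $\C$, $\{\X\}^{\wedge}_{et}$ is weakly equivalent to the profinite completion of the classical homotopy type of $\X^{an}$. This yields $\{\a_D \otimes \C\}^{\wedge}_{et} \simeq K(\SP_D(\Z), 1)^{\wedge}$. For step (iii), the invariance of the profinite \'etale homotopy type under the base-field extension $\bar{\Q} \hookrightarrow \C$ — standard for smooth DM stacks of finite type, via smooth and proper base change for \'etale cohomology with finite coefficients — gives $\{\a_D \otimes \bar{\Q}\}^{\wedge}_{et} \simeq \{\a_D \otimes \C\}^{\wedge}_{et}$. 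Composing the three equivalences proves the theorem; the principally polarised case follows by specialising $D = (1, \ldots, 1)$ so that $\SP_D(\Z) = \SP(2g, \Z)$.

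The main technical obstacle is step (ii): one must verify that the Artin-Mazur machinery, originally developed for simplicial schemes, captures the full homotopy type of the DM stack itself rather than its coarse moduli space, and does so in a way that is compatible with the analytic quotient presentation. A practical way to handle this is to choose an \'etale hypercover of $\a_D$ through level-$N$ rigidifications $\a_{D,[N]} \to \a_D$ for $N$ large enough to kill torsion in the automorphism groups, so that the cover is representable by a smooth scheme; the stacky comparison then reduces to the scheme-level Artin-Mazur theorem together with a descent/Galois argument for the finite quotient by $\SP_D(\Z/N)$, whose cohomology is precisely what recovers the torsion of $\SP_D(\Z)$ in the profinite completion.
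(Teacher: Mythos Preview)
Your three-step outline---compute the classical homotopy type of $\a_D^{an}$ via the quotient presentation $[\HH_g/\SP_D(\Z)]$ and contractibility of $\HH_g$, invoke a stacky Artin--Mazur comparison, and descend from $\C$ to $\bar{\Q}$---is exactly the paper's strategy, and your argument for step~(i) is essentially the paper's Theorem~\ref{anaho}. The one point of divergence is your proposed implementation of the comparison step: you suggest passing through the level-$N$ cover $\a_{D,[N]}\to\a_D$ (a scheme for $N$ large) and then running a finite Galois descent for $\SP_D(\Z/N\Z)$, whereas the paper instead takes an \emph{arbitrary} \'etale atlas $X\to\a_D\otimes\bar{\Q}$, forms its \v{C}ech nerve as a hypercovering, and applies Cox's comparison theorem for simplicial schemes directly (this is packaged as the paper's Theorem~\ref{homdesc}). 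The paper's route is cleaner in that it avoids the bookkeeping of the finite-quotient descent and does not rely on the representability of $\a_{D,[N]}$; your route has the virtue of making the reduction to the scheme case completely explicit and would work equally well. Either way the argument is sound.
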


An important special case of $\a_g$ is given by the moduli stack $\m_{ell}$ of elliptic curves. Here $g=1$ and we have $\{\m_{ell} \otimes\bar{\Q} \}^{\wedge}_{et} \simeq K(\SL (2, \Z), 1)^{\wedge}.$ The arithmetic properties of the Deligne-Mumford stack $\m_{ell}$ were first studied by Deligne and Rapoport \cite{DR} and it plays a fundamental role also in the systematic construction of elliptic cohomology theories \cite{Lu}. 

We also derive similar results for the moduli stacks $\a_{D, [N]}$ which are modifications of our main result using appropriate congruence subgroups $\Gamma_D(N)$ of $\SP_D (\Z)$ which reflect the particular level $N$ structures. 

From these general results we also obtain short exact sequences involving the \'etale fundamental groups of the moduli stacks $\a_D$ and $\a_{D, [N]}$ relating them to the profinite completions of the discrete groups $\SP_D (\Z)$ and $\Gamma_D(N)$ and the absolute Galois group
$\Gal(\bar{\Q}/\Q)$. In this way we extend the results of Oda to other interesting moduli stacks which allow for further interpretations of geometric Galois actions. As a bonus we can directly relate the \'etale homotopy types of the moduli stacks $\m_g$ of algebraic curves with the \'etale homotopy types of the moduli stacks $\a_g$ of principally polarised abelian schemes via the stacky Torelli morphism $j: \m_g\rightarrow \a_g$ between these moduli stacks, which is induced by taking Jacobians of algebraic curves. We also make similar observations again for the moduli stacks $\m_{g, [N]}$ and $\a_{g, [N]}$ which take into account the level structures.

\ms\noindent The paper is organized as follows: In the first section we recall aspects of the general theory of moduli of abelian schemes including polarisations and level structures and introduce the respective moduli stacks. In the next section we define Artin-Mazur homotopy types of  Deligne-Mumford stacks and compare the \'etale and classical homotopy types in the algebraic and complex analytic context.  In the third section we determine the \'etale homotopy types of the moduli stacks of abelian schemes over the algebraic closure of the rational numbers. And in the final section we relate the \'etale homotopy types of the moduli stacks of algebraic curves and abelian schemes with principal polarisations by analysing the stacky Torelli morphism. 

\section{Moduli stacks of abelian schemes with polarisations}

We will recollect in this section the main aspects of the theory of abelian schemes with
polarisations and their moduli following Mumford \cite{M1}, Chai \cite{C}, Faltings-Chai \cite{FC} and
Mumford-Fogarthy-Kirwan \cite{MFK}. For the general theory of algebraic stacks we refer to 
the book of Laumon and Moret-Bailly \cite{LMB} and the online Stack Project \cite{SP}. 
Other good sources are \cite{DM}, \cite{Vi}, \cite{Go} and \cite{fa}.

\begin{definition}
Let $S$ be a base scheme. An {\it abelian scheme} $A/S$ is a proper smooth group scheme
$\pi: A\rightarrow S$ over $S$ with connected geometric fibers.
\end{definition}

Being a group scheme, an abelian scheme $A/S$ is equipped with the following structure morphisms:
\begin{itemize}
\item[(1)] a group law or multiplication morphism $\mu: A\times_S A\rightarrow A$,
\item[(2)] a unit section morphism $e: S\rightarrow A$,
\item[(3)] an inverse morphism $i: A\rightarrow A$,
\end{itemize}
subject to the usual axioms for being an abstract group.

It turns out, that in fact any abelian scheme $A/S$ is a commutative group scheme (cf. \cite[Cor. 6.5]{MFK}).\\

Let $A/S$ be an abelian scheme, $\l$ be an invertible sheaf on $A$ and 
$$p_1, p_2: A\times_S A \rightarrow A$$
be the projections to the first and second factor respectively. Then it
follows that the induced sheaf
$$\mu^*\l\otimes p_1^*\l^{-1} \otimes p_2^*\l^{-1}$$
is an invertible sheaf on the fiber product $A\times_S A$.
Regarding $A\times_S A$ as a scheme over $A$ via $p_2$ we get therefore an $S$-morphism
$$\lambda(\l): A \rightarrow \Pic^0(A/S).$$
Here $\Pic^0(A/S)$ is an open subspace and the neutral component of the abelian algebraic space $\Pic(A/S)$ representing the relative Picard functor
$\P\i\c_{A/S}$, which corresponds to the subfunctor  $\P\i\c_{A/S}^0$. $\Pic^0(A/S)$ is in fact an abelian scheme by a theorem of Raynaud \cite[Theorem 1.9]{FC} called the {\it dual abelian scheme} of $A/S$. By the theorem of the cube, the $S$-morphism $\lambda(\l)$ is actually a group homomorphism respecting the group structures of $A/S$ and $\Pic^0(A/S)$ (cf. \cite{MFK}, \cite{FC}, \cite{GeN}). 

Following \cite{MFK} we define the general notion of a polarisation for an abelian scheme:

\begin{definition}
Let $S$ be a base scheme. A {\it polarisation} of an abelian scheme $A/S$ is an $S$-homomorphism of group schemes
$$\lambda: A/S \rightarrow \Pic^0(A/S)$$
such that for each geometric point $s$ of $S$ the induced homomorphism
$$\lambda_s: A_s \rightarrow \Pic^0(A_s)$$
is of the form $\lambda_s=\lambda(\l_s)$ for some ample invertible sheaf $\l_s$ on the fibre $A_s$ of $A$ over the geometric point $s$.

$\lambda$ is called a {\it principal polarisation} if it is an isomorphism.
\end{definition}

Polarisations can be lifted to invertible sheaves globally as follows (cf. \cite{MFK}). Consider the universal (or Poincar\'e) invertible sheaf ${\mathcal P}$ on $A \times_{S} \Pic^0(A/S)$, which is uniquely normalized by imposing that its pull-back on $\Pic^0(A/S)$  via the morphism $$\Pic^0(A/S) \cong S \times_{S} \Pic^0(A/S)  \stackrel{e \times Id} \longrightarrow  A\times_S \Pic^0(A/S)$$ 
is the trivial  bundle ${\mathcal O}_{\Pic^0(A/S)}$. 

Assume that $\lambda$ is a polarisation, consider the map $(Id , \lambda): A \rightarrow A\times_{S} \Pic^0(A/S)$ and denote by ${\mathcal L}_{\lambda} := (Id ,\lambda)^* ({\mathcal P})$. 
We have the morphism  
$\lambda({\mathcal L}_{\lambda}): A \to \Pic^0(A/S)$ defined as above and one can prove that  $\lambda({\mathcal L}_{\lambda}) = 2 \lambda$ and ${\mathcal L}_{\lambda}$ is an ample invertible sheaf (see \cite{MFK} prop. 6.10, \cite{FC} p. 4).

The kernel of $\lambda$ has rank $r^2$ for some locally constant positive integer valued function $r$ on $S$ and $r$ is called the {\it degree} of $\lambda$ (cf. \cite{Ol}, 5.1 and \cite{M2}).  
If $\lambda: A/S \rightarrow \Pic^0(A/S)$ is a polarisation of an abelian scheme $A/S$, then its degree $r$ is constant on each connected component and $\lambda$ is a finite and faithfully flat morphism, i.e an isogeny. In fact, any polarisation of an abelian scheme $A/S$ is a symmetric isogeny
$\lambda: A/S \rightarrow \Pic^0(A/S)$, which locally for the \'etale topology of $S$ is of the form $\lambda(\l)$ for some ample line bundle $\l$ on $A/S$.

If the base scheme $S$ is connected with residue characteristic prime to the degree of $\lambda$, then there exists integers $d_1|d_2|\ldots |d_g$, where $g$ is the relative dimension of $A/S$, such that for any geometric point $s$ of $S$ we have an isomorphism of abelian groups (cf. \cite{GeN})
$$\ker \lambda_s\cong (\Z/d_1\Z\times \Z/d_2\Z\times \ldots \times \Z/d_g\Z)^2.$$
The tuple $D=(d_1, d_2, \ldots d_g)$ is called the {\it type of the polarisation} $\lambda$ of the abelian scheme $A/S$. If $\lambda$ is a principal polarisation, its type is just $D=(1, 1,\ldots 1)$ and especially $d_g=1$.\\

Let $V=\Z^{2g}$ and let $\phi_D: V\times V\rightarrow \Z$ be a symplectic pairing, such that on the standard ordered basis $\{e_i\}_{i=1, \ldots, 2g}$ it holds that
$\phi_D(e_i, e_j)=0$ if $i, j\leq g$ or $i, j\geq g$ and $\phi_D(e_i, e_{j+g})=\delta_{i, j}\cdot d_i$ if  $i, j\leq g$. The group $\CSP_D=\CSP(V, \phi_D)$, called the {\it conformal symplectic group}, is the reductive algebraic group over $\Spec(\Z)$ of symplectic similitudes of $V$ with respect to the form $\phi_D$ and $\nu: \CSP_D\rightarrow \G_m$ is the associated multiplier character. The kernel $\SP_D=\ker \nu$ is the symplectic group $\SP_D$ of all transformations of $V$ preserving the form $\phi_D$. In the special case that $d_g=1$ we get the familiar groups $\CSP_g$ and $\SP_{2g}$ (cf. \cite{MO}).

We will now define the moduli stack of polarised abelian schemes, which will be the main object of our studies.

\begin{definition} Let $g\geq 1$ be a positive integer and $D=(d_1, d_2, \ldots d_g)$ be a tuple of integers with $d_1|d_2|\ldots |d_g$. 
Let $(Sch/\Z[(d_g)^{-1}])$ be the category of schemes over $\Spec(\Z[(d_g)^{-1}])$ and $\a_D$ be the category fibred in groupoids over $(Sch/\Z[(d_g)^{-1}])$ defined by its groupoid of sections $\a_D(S)$ as follows: 

For a scheme $S$ over $\Spec(\Z[(d_g)^{-1}]$, let $\a_{D}(S)$ be the groupoid
whose objects are the pairs $(A/S, \lambda)$, where $A/S$ is an abelian scheme over $S$ and
$$\lambda: A/S \rightarrow \Pic^0(A/S)$$
a polarisation of type $D$. The morphisms of $\a_D(S)$ are $S$-isomorphisms of abelian schemes compatible
with polarisations of type $D$, i.e. a morphism between $(A/S, \lambda)$ and $(A'/S, \lambda')$ is a homomorphism
of abelian schemes $\varphi: A/S \rightarrow A'/S$ such that $\varphi^*(\lambda')= \lambda$.

If we give the category $(Sch/\Z[(d_g)^{-1}])$ the \'etale topology, then $\a_D$ is a stack over the big \'etale site $(Sch/\Z[(d_g)^{-1}])_{et}$, called the {\em moduli stack of abelian schemes with polarisations of type D}.

If $d_g=1$, then $\a_D$ is the {\it moduli stack $\a_g$ of principally polarised abelian schemes of relative dimension $g$} over $(Sch/\Z)_{et}$.
\end{definition}

It turns out that the moduli stacks $\a_D$ behave very nicely. In fact, they are algebraic stacks in the
sense of Deligne-Mumford \cite{DM} on the big \'etale site $(Sch/\Z[(d_g)^{-1}])_{et}$.

\begin{theorem}\label{DMthm}
Let $g\geq 1$ be a positive integer and $D=(d_1, d_2, \ldots d_g)$ be a tuple of positive integers with $d_1|d_2|\ldots |d_g$. 
The moduli stack $\a_D$ is a smooth quasi-projective Deligne-Mumford stack of finite type over $\Spec(\Z[(d_g)^{-1}])$.

In particular for $d_g=1$, the moduli stack $\a_g$ is a quasi-projective smooth Deligne-Mumford stack of finite type over $\Spec(\Z)$ of relative dimension $g(g+1)/2$.
\end{theorem}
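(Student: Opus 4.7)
The strategy is to rigidify the moduli problem by introducing auxiliary level structures, show representability by a quasi-projective scheme at that level, and then present $\a_D$ itself as a finite group quotient of this scheme. Specifically, fix an auxiliary integer $n\geq 3$ with $(n,d_g)=1$ and consider the functor $\a_{D,[n]}$ classifying triples $(A/S,\lambda,\alpha)$ where $(A/S,\lambda)$ is a polarised abelian scheme of type $D$ over $S\in (Sch/\Z[(d_gn)^{-1}])$ and $\alpha:(\Z/n\Z)^{2g}_S\stackrel{\sim}{\to} A[n]$ is a symplectic level $n$ structure compatible with the Weil pairing induced by $\lambda$ and the form $\phi_D$. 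By Serre's rigidity lemma, for $n\geq 3$ any automorphism of an abelian variety acting trivially on $A[n]$ is the identity, so the groupoid fibres $\a_{D,[n]}(S)$ are discrete, i.e. $\a_{D,[n]}$ is a sheaf rather than a genuine stack.

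Next I would show that $\a_{D,[n]}$ is representable by a quasi-projective scheme via the Hilbert scheme/GIT construction of Mumford. If $\lambda$ is locally given by an ample line bundle $\l$, then $\l^{\otimes 3}$ is relatively very ample and embeds $A/S$ into a projective space with a fixed Hilbert polynomial depending only on $g$ and $D$. Thus $\a_{D,[n]}$ is cut out as a locally closed subscheme of a suitable Hilbert scheme by the closed conditions of being a group scheme with a polarisation of the prescribed type and level structure. Mumford's GIT in \cite{MFK} produces a quasi-projective scheme structure and gives finite type over $\Spec(\Z[(d_gn)^{-1}])$, after which one descends from the level cover back to $\Spec(\Z[(d_g)^{-1}])$.

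Then the finite group $\SP_D(\Z/n\Z)$ acts naturally on $\a_{D,[n]}$ by change of level structure, and the stack quotient satisfies $[\a_{D,[n]}/\SP_D(\Z/n\Z)]\simeq \a_D$. Since $\a_{D,[n]}$ is a scheme acted on by a finite group and automorphisms in the cover are trivial by rigidity, the quotient is a Deligne-Mumford stack of finite type; quasi-projectivity of the coarse moduli space descends via Keel-Mori from that of $\a_{D,[n]}$. For smoothness I would invoke Grothendieck's deformation theory of abelian schemes: the deformation functor of an abelian variety is formally smooth with tangent space $H^1(A,T_A)$ of dimension $g^2$, and a polarisation of degree prime to the residue characteristic (whence the necessity of inverting $d_g$) deforms uniquely along infinitesimal thickenings subject only to a symmetry condition, cutting the relative dimension down to $g(g+1)/2$ in the principal case.

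The main obstacle is the quasi-projectivity/GIT step: verifying that the locus in the Hilbert scheme parameterising triply-polarised abelian varieties is GIT-stable and forms a quasi-projective scheme requires Mumford's stability analysis and is the technical core of the argument. The remaining pieces---rigidity via Serre's lemma, the quotient stack formalism, Keel-Mori for the coarse space, and unobstructed deformation theory---are comparatively formal. The hypothesis that $d_g$ be invertible enters in two essential places: it guarantees the existence of compatible level $n$ structures for appropriate $n$, and it ensures that the finite automorphism group schemes of polarised abelian varieties are étale, so that the resulting algebraic stack is Deligne-Mumford rather than only Artin.
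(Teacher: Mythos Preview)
Your proposal is correct and in fact spells out precisely the argument that the paper only gestures at: the paper's own ``proof'' consists of a single sentence citing Faltings--Chai \cite[4.11]{FC}, Laumon--Moret-Bailly \cite[(4.6.3)]{LMB}, and the GIT discussion in \cite[Section 2.3]{GeN}, and the strategy in those references is exactly the rigidify-via-level-structure, represent-by-GIT, then-take-finite-quotient route you describe. So there is no genuine difference of method; you have simply unpacked what the paper delegates to the literature.

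One small point worth tightening: after constructing $\a_{D,[n]}$ as a quasi-projective scheme over $\Spec(\Z[(d_gn)^{-1}])$ and identifying $\a_D\otimes\Z[(d_gn)^{-1}]$ with the quotient stack, you still need to cover all of $\Spec(\Z[(d_g)^{-1}])$. Your phrase ``one descends from the level cover back to $\Spec(\Z[(d_g)^{-1}])$'' elides this. The standard fix is to choose two coprime auxiliary levels $n,n'\geq 3$ (for instance $n=3$ and $n'=4$) both prime to $d_g$; the resulting open substacks over $\Spec(\Z[(d_gn)^{-1}])$ and $\Spec(\Z[(d_gn')^{-1}])$ glue to give the Deligne--Mumford structure over all of $\Spec(\Z[(d_g)^{-1}])$. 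This is routine but should be said explicitly.
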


\begin{proof}
That $\a_D$ is a Deligne-Mumford stack is a variation of the proof of the same property for the stack $\a_g$ in \cite[4.11]{FC}, \cite[(4.6.3)]{LMB} for moduli of principally polarised abelian schemes using polarisations of general type $D$ and the other assertions follow by general GIT arguments (cf. \cite[Section 2.3]{GeN}).
\end{proof}

We will be mainly interested in the restriction of these moduli stacks $\a_D$ and $\a_g$ to the category of schemes over the field of rational numbers $\Q$ and over its algebraic closure $\bar{\Q}$.

Let us denote by $\a^{an}_D$ the complex analytification or the uniformisation of the moduli stack $\a_D$.
It  is a complex analytic Deligne-Mumford stack, i.e. a complex analytic orbifold over the big site $(AnSp)_{cl}$ of complex analytic spaces with the classical topology of local analytic isomorphisms (cf. \cite{T}, \cite[Section 2.3]{fn} for a general discussion about the complex analytification of algebraic Deligne-Mumford stacks).

In fact, the moduli stack  $\a^{an}_D$ is a complex analytic quotient stack of the form
$$\a^{an}_D=[\HH_g/\SP_D(\Z)]$$
\noindent where $\HH_g$ is the {\it Siegel upper half space of genus $g$} given as

$$\HH_g=\{\Omega\in \Mat(g, \C)|\, \Omega= {}^t\Omega, \IM (\Omega) >0\}$$
and where $\SP_D(\Z) = \{\gamma\in \Mat(2g, \Z) | {}^t\gamma J_D\gamma = J_D\} $ with
$$J_D =\left(
\begin{array}{cc}
0 & D \\
  &  \\
-D & 0\\
\end{array}
\right)
$$ 
Here $D$ is the diagonal matrix with entries $(d_1,...,d_g)$ and the action of $\SP_D(\Z)$ on $\HH_g$  is given as follows: 
\begin{equation}
\label{action}
\left(
\begin{array}{cc}
a & b \\
  &  \\
c & d\\
\end{array}
\right)\Omega = (a\Omega + bD)(D^{-1}c\Omega + D^{-1} d D)^{-1}.
\end{equation}

In fact, using Riemann's bilinear relations (cf. \cite{BL}) one can prove that $\HH_g$ parametrises  isomorphism classes of abelian varieties with a polarisation of type $D$ together with a choice of a symplectic basis. 

More precisely, once we fix $D$, to a point $\Omega \in \HH_g$ we associate the triple $(A_{\Omega} = \C^g/\Lambda_{\Omega}, H_{\Omega}, \{\lambda_1, ..., \lambda_{2g}\})$, where $\Lambda_{\Omega} = \Omega \cdot \Z^g + D\cdot\Z^g$, $H_{\Omega} = (\IM(\Omega))^{-1}$ is the Hermitian form on $\C^g$ which gives the polarisation and $  \{\lambda_1, ..., \lambda_{2g}\}$ are the columns of the $(g\times 2g)$-period matrix $(\Omega \ D)$. One immediately checks that with respect to this basis $\IM(H_{\Omega})_{|\Lambda_{\Omega} \times \Lambda_{\Omega}}$ is given by the matrix $\IM({}^t(\Omega \ D) (\IM(\Omega))^{-1}\overline{(\Omega \ D)} )= J_D$, so the basis  $ \{\lambda_1, ..., \lambda_{2g}\}$  is symplectic. Now, if one wants to parametrise isomorphism classes of abelian varieties with a polarisation of type $D$, one has to take the quotient of $\HH_g$ by the action of the group $\SP_D(\Z)$.
This action is properly discontinuous  (cf. \cite{BL}), hence the quotient $\HH_g/\SP_D(\Z)$ has the structure of a complex analytic orbifold. 

In the case $d_g=1$ we get the complex analytic Deligne-Mumford stack $\a^{an}_g$, which is given as the complex analytic quotient stack or orbifold
$$\a^{an}_g=[\HH_g/\SP(2g, \Z)]$$
where $\SP(2g, \Z)$ is the symplectic group 
$$\SP(2g, \Z)=\{ \gamma\in \Mat(2g, \Z) | {}^t\gamma J\gamma = J\}$$
with
$$J =\left(
\begin{array}{cc}
0 & I_g \\
-I_g & 0\\
\end{array}
\right),
$$
The action \eqref{action}  becomes: 
\begin{eqnarray}
\SP(2g, \Z) \times \HH_g &\rightarrow& \HH_g\nonumber \\
( \left ( \begin{array}{cc}
\vspace*{0.1cm}a & b \\
c & d\\
\end{array}\right ), \Omega ) &\mapsto& (a\Omega + b) \cdot (c\Omega + d)^{-1}. \nonumber
\end{eqnarray}
\noindent and the universal family of principally polarised abelian varieties over the Siegel upper half space $\HH_g$ has as fiber over a point $\Omega \in \HH_g$
$A_{\Omega}=   \C^g/(\Omega \Z^g + \Z^g),$ with the polarisation given by $(\IM(\Omega))^{-1}$. \\

We also like to look at rigidified moduli spaces of abelian schemes using level structures. We recollect here the basic 
definitions and properties (cf. \cite{FC}, \cite{GeN}, \cite{BL}, \cite{C} and \cite{MO}).

As before, let $g, N\geq 1$ be integers and $D=(d_1, d_2, \ldots d_g)$ be a tuple of positive integers with $d_1|d_2|\ldots |d_g$ and $\gcd(d_g, N)=1$. Let $A/S$ be an abelian scheme of relative dimension $g$ over a base scheme $S$ and $$\lambda: A/S \rightarrow \Pic^0(A/S)$$ a polarisation of type $D=(d_1, d_2, \ldots d_g)$. We have the endomorphism of abelian schemes $[N]: A\rightarrow A$ given by multiplication with $N$ and we will denote its kernel by $A[N]=\ker [N]$. $A[N]$ is a locally free finite group scheme of rank $N^{2g}$. We also have the Weil pairing
$$e^D_N: A[N]\times A[N]\rightarrow \mu_N,$$
where $\mu_N$ is the group of $N$-th roots of unity. The morphism $e^D_N$ is a non-degenerate bilinear form, which is also symplectic meaning that $e^D_N (x, x)=1$ for all schemes $U$ over the base $S$ and all $U$-valued points $x\in A[N](U)$. If in addition $N$ is invertible in $\Gamma(S, \O_S)$, the group scheme $A[N]$ is also \'etale over $S$.
Now let $\Z[\zeta_N]=\Z[t]/(\Phi_N)$, where $\Phi_N\in \Z[t]$ is the $N$-th cyclotomic polynomial.

Fix a base scheme $S$ over the ring $\Z[\zeta_N, (Nd_g)^{-1}]$. This means that $S$ is a scheme such that $Nd_g$ is invertible in $\Gamma(S, \O_S)$ and we have fixed a primitive $N$-th root of unity $\zeta_N\in \Gamma(S, \O_S)$. We can interpret $\zeta_N$ as an isomorphism
of group schemes $\zeta_N: (\Z/N\Z)\stackrel{\cong}\longrightarrow A[N]$. 

Let again $V=\Z^{2g}$ and $\phi_D: V\times V\rightarrow \Z$ be a fixed symplectic pairing with the same properties as before. We then define:

\begin{definition} Let $g, N\geq 1$ be integers, $D=(d_1, d_2, \ldots d_g)$ be a tuple of positive integers with $d_1|d_2|\ldots |d_g$ and $\gcd(d_g, N)=1$ and $S$ be a scheme over $\Z[\zeta_N, (Nd_g)^{-1}]$. Let $A/S$ be an abelian scheme of relative dimension $g$ over over $S$ and $$\lambda: A/S \rightarrow \Pic^0(A/S)$$ be a polarisation of type $D=(d_1, d_2, \ldots d_g)$. A {\it (symplectic) level $N$ structure of} $A/S$ is an isomorphism of group schemes
$$\eta: (V/NV)\stackrel{\cong}\longrightarrow A[N]$$ 
such that the diagram
$$\diagram
(V/NV)\times (V/NV) \rto^{\hspace*{1cm}\phi_D} \dto_{\eta\times \eta} &     (\Z/N\Z) \dto^{\zeta_N} \\
A[N]\times A[N] \rto^{\hspace*{1cm}e^D_N} &           \mu_N  \enddiagram $$
is commutative.
\end{definition}

We can now look at the moduli stacks of polarised abelian schemes with level $N$ structures.

\begin{definition} Let $g, N\geq 1$ be integers and $D=(d_1, d_2, \ldots d_g)$ be a tuple of positive integers with $d_1|d_2|\ldots |d_g$ and $\gcd(d_g, N)=1$. Let $(Sch/\Z[\zeta_N, (Nd_g)^{-1}])$ be the category of schemes over $\Z[\zeta_N, (Nd_g)^{-1}]$. Let $\a'_{D, [N]}$ be the category fibred in groupoids over $(Sch/\Z[\zeta_N, (Nd_g)^{-1}])$ defined by its groupoid of sections $\a'_{D, [N]}(S)$ as follows: 

For a scheme $S$ over $\Spec(\Z[\zeta_N, (Nd_g)^{-1}])$, let $\a'_{D, [N]}(S)$ be the groupoid
whose objects are the triples $(A/S, \lambda, \eta)$, where $A/S$ is an abelian scheme over $S$,
$$\lambda: A/S \rightarrow \Pic^0(A/S)$$
a polarisation of type $D$ and 
$$\eta: (V/NV)\stackrel{\cong}\longrightarrow A[N]$$
a level $N$ structure.
The morphisms are $S$-isomorphisms of abelian schemes compatible
with polarisations of type $D$ and level $N$ structures, i.e. a morphism between $(A/S, \lambda, \eta)$ and $(A'/S, \lambda', \eta')$ is a homomorphism
of abelian schemes $\varphi: A/S \rightarrow A'/S$ such that $\varphi^*(\lambda')= \lambda$ and $\eta'=\varphi[N]\circ \eta$, where $\varphi[N]$ is the induced momorphism on the group schemes of $N$-torsion points.

If we give the category $(Sch/\Z[\zeta_N, (Nd_g)^{-1}])$ the \'etale topology, then $\a'_{D, [N]}$ is a stack over the big \'etale site $(Sch/\Z[\zeta_N, (Nd_g)^{-1}])_{et}$,

The {\em moduli stack $\a_{D, [N]}$ of abelian schemes with polarisations of type D and level $N$ structures} is defined as the stack $\a'_{D, [N]}$ viewed as a stack over the big \'etale site $(Sch/\Z[(Nd_g)^{-1}])_{et}$ with structure morphism given by the natural composition
$$\a_{D, [N]}:=\Big( \a'_{D, [N]}\rightarrow \Spec(\Z[\zeta_N, (Nd_g)^{-1}])\rightarrow \Spec(\Z[(Nd_g)^{-1}])\Big).$$

If $d_g=1$, then $\a_{D, [N]}$ is the {\it moduli stack $\a_{g, [N]}$ of principally polarised abelian schemes of relative dimension $g$ with level $N$ structures} over $(Sch/\Z[N^{-1}])_{et}$.
\end{definition}

The moduli stacks of polarised schemes with level structures behave very well. In fact we have the following (cf. \cite{MO}, \cite{FC}).

\begin{theorem}
Let $g, N\geq 1$ be integers and $D=(d_1, d_2, \ldots d_g)$ be a tuple of positive integers with $d_1|d_2|\ldots |d_g$ and $\gcd(d_g, N)=1$. 
The moduli stack $\a'_{D, [N]}$ is a quasi-projective smooth Deligne-Mumford stack of finite type over $\Spec(\Z[\zeta_N, (Nd_g)^{-1}])$ with irreducible geometric fibers. Furthermore, if $N$ is large enough with respect to $D$, then $\a'_{D, [N]}$ is a smooth quasi-projective scheme over $\Spec(\Z[\zeta_N, (Nd_g)^{-1}])$.
The moduli stack $\a_{D, [N]}$ is a quasi-projective smooth Deligne-Mumford stack of finite type over $\Spec(\Z[(Nd_g)^{-1}])$ whose geometric fibers have $\varphi(N)$ irreducible components, where $\varphi$ denotes Euler's function. Furthermore, if $N$ is large enough with respect to $D$, then $\a_{D, [N]}$ is a smooth quasi-projective scheme over $\Spec(\Z[(Nd_g)^{-1}])$. 

In particular for $d_g=1$, the moduli stack $\a_{g, [N]}$ is a quasi-projective smooth Deligne-Mumford stack of finite type over $\Spec(\Z[N^{-1}])$. If in addition $N\geq 3$, then $\a_{g, [N]}$ is a smooth quasi-projective scheme over $\Spec(\Z[N^{-1}])$.
\end{theorem}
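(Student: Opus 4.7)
The plan is to deduce this theorem from the previous Theorem \ref{DMthm} about $\a_D$ by analysing the natural forgetful morphism that erases the level $N$ structure. Since $\gcd(d_g,N)=1$ and $N$ is invertible on the base $\Spec(\Z[\zeta_N,(Nd_g)^{-1}])$, the $N$-torsion group scheme $A[N]$ is finite étale of rank $N^{2g}$, and the Weil pairing $e_N^D$ is a non-degenerate symplectic pairing on $A[N]$. Consequently, étale-locally on $S$ the sheaf of symplectic isomorphisms $(V/NV,\phi_D) \xrightarrow{\cong} (A[N],e_N^D)$ is a principal homogeneous space under the finite group $\SP_D(\Z/N\Z)$. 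Thus the forgetful morphism
\[
\pi : \a'_{D,[N]} \longrightarrow \a_D \times_{\Spec(\Z[(d_g)^{-1}])} \Spec(\Z[\zeta_N,(Nd_g)^{-1}])
\]
is representable, finite and étale, being a twisted form of an $\SP_D(\Z/N\Z)$-torsor.

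Granting Theorem \ref{DMthm}, I would first conclude the Deligne-Mumford and finite-type properties of $\a'_{D,[N]}$ from the corresponding properties of $\a_D$ by descent along $\pi$: being a representable, finite étale cover of a smooth DM stack of finite type, $\a'_{D,[N]}$ inherits smoothness, the DM property, and finite type. Quasi-projectivity can either be pulled back from the quasi-projective coarse moduli space of $\a_D$ (using that $\pi$ is finite), or obtained directly by Mumford's GIT construction in \cite{MFK}, \cite{FC} via theta level structures of sufficiently high level. Irreducibility of the geometric fibers of $\a'_{D,[N]}$ over $\Spec(\Z[\zeta_N,(Nd_g)^{-1}])$ then follows transcendentally: over $\C$ we have the uniformisation $[\HH_g/\Gamma_D(N)]$, where $\Gamma_D(N)\subset \SP_D(\Z)$ is the principal congruence subgroup of level $N$, and $\HH_g$ is connected.

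For the statement over $\Spec(\Z[(Nd_g)^{-1}])$, the structure morphism factors through the base change $\Spec(\Z[\zeta_N,(Nd_g)^{-1}]) \to \Spec(\Z[(Nd_g)^{-1}])$, which is finite étale of degree $\varphi(N)$ with Galois group $(\Z/N\Z)^{\times}$. Since each geometric fiber of $\a'_{D,[N]}$ is irreducible, the geometric fibers of $\a_{D,[N]}$ acquire exactly $\varphi(N)$ connected (hence irreducible) components, permuted simply transitively by the Galois action on the primitive $N$-th roots of unity, as recorded in \cite{FC}, \cite{MO}.

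The remaining — and from my point of view, the most delicate — claim is that for $N$ large enough with respect to $D$ the stack becomes an honest scheme. I would establish this via a Serre-type rigidity lemma: any automorphism $\varphi$ of a polarised abelian variety $(A,\lambda)$ over a field whose restriction $\varphi[N]$ to $A[N]$ is the identity must itself be the identity once $N$ is large enough compared to the degree of $\lambda$ (equivalently, to $D$). The classical argument writes $\varphi$ as an element of the order $\End(A)$ that is $\equiv 1 \pmod{N}$ and preserves the polarisation, and bounds the eigenvalues via the Rosati involution and the Riemann form to force $\varphi=1$. In the principally polarised case this is exactly Serre's lemma, which gives the sharper bound $N\geq 3$. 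Once all automorphism groups in the groupoids $\a'_{D,[N]}(S)$ are trivial, the stack is equivalent to its associated algebraic space, and combined with the already established quasi-projectivity this yields the scheme assertion. The main obstacle is quantitatively pinning down the threshold on $N$ in the general $D$-polarised setting; but for the qualitative statement "$N$ large enough" the above rigidity argument suffices.
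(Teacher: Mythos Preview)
Your proposal is correct and uses the same essential ingredients as the paper --- reduction to the non-level case $\a_D$ (Theorem~\ref{DMthm}), Serre's rigidity lemma for the scheme assertion, and Mumford's GIT for quasi-projectivity --- but you organise the argument differently. The paper's proof is terse: it says the Deligne--Mumford property ``follows along the same arguments as for $\a_D$'' (i.e.\ one reruns the proof of Theorem~\ref{DMthm} with level structures added), then cites Serre's lemma \cite{S1} and Mumford's GIT \cite{MFK} directly for the principally polarised case $N\geq 3$, and finally points to \cite[Theorem 2.3.1]{GeN} for the adaptation to general $D$ when $N$ is large. By contrast, you \emph{deduce} the stack-theoretic properties from Theorem~\ref{DMthm} by exhibiting the forgetful map $\pi$ as a representable finite \'etale $\SP_D(\Z/N\Z)$-torsor and pulling the properties back; this is cleaner and avoids repeating the GIT argument for the basic DM and smoothness claims. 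You also spell out the irreducibility and the $\varphi(N)$-component count explicitly via the uniformisation $[\HH_g/\Gamma_D(N)]$ and the Galois cover $\Spec(\Z[\zeta_N,(Nd_g)^{-1}])\to\Spec(\Z[(Nd_g)^{-1}])$, whereas the paper leaves these to the cited references \cite{FC}, \cite{MO}. One small caveat: your transcendental argument gives irreducibility of the fibre over $\C$; to conclude for all geometric fibres (including positive characteristic not dividing $Nd_g$) you still need a connectedness/specialisation step, e.g.\ via smoothness over a connected base and a Stein-factorisation or Zariski-connectedness argument as in \cite{FC}. The paper does not address this either, relying on the references.
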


\begin{proof} That $\a_{D, [N]}$ is a Deligne-Mumford stack over $\Spec(\Z[(Nd_g)^{-1}])$ follows along the same arguments as for $\a_D$ in  Theorem \ref{DMthm}.

In the special case that $d_g=1$, Serre's lemma \cite{S1} shows that if $N\geq 3$, then the algebraic stack $\a_{g, [N]}$ is a smooth algebraic space over $\Spec(\Z[N^{-1}])$. Mumford proved using GIT methods \cite{MFK} that $\a_{g, [N]}$ is in fact a smooth quasi-projective scheme of finite type over $\Spec(\Z[N^{-1}])$. 
These methods can also be adapted to the general case if $N$ is large enough with respect to $d_g$ (cf. \cite[Theorem 2.3.1]{GeN}).
\end{proof}

Again in the following sections we will be mainly interested in the restriction of these moduli stacks $\a_{D, [N]}$ and $\a_{g, [N]}$ to the category of schemes over the fields $\Q$ and $\bar{\Q}$.\\

Finally, we like to compare the algebraic with the complex analytic setting, which is easier to describe.  So let us denote by $\a^{an}_{D, [N]}$ the complex analytification or the uniformisation of the moduli stack $\a_{D, [N]}$. It  is again a complex analytic Deligne-Mumford stack over the big site $(AnSp)_{cl}$.

In fact, the moduli stack  $\a^{an}_{D, [N]}$ is again given as a quotient stack.  Let us explain this. Consider a complex abelian variety of dimension $g$, $A = W/\Lambda$, where $W$ is a complex vector space of dimension $g$ and $\Lambda$ is a lattice. The set of $N$-torsion points $A[N]$ can be identified with the finite group $N^{-1} \Lambda/\Lambda \cong (\Z/N\Z)^{2g}$. Assume that $E$ is a polarisation of type $D=(d_1,...,d_g)$ with $d_g$ and $N$ coprime. The alternating form $E: \Lambda \times \Lambda \rightarrow \Z$ can be extended to a non-degenerate symplectic form on $\Lambda \otimes \Q$. The Weil pairing $(\alpha, \beta) \mapsto \exp(2\pi i NE(\alpha, \beta))$ defines a symplectic non-degenerate form 
$$e^D_N: A[N] \times A[N] \rightarrow \mu_N, $$
and if we choose a primitive $N$-th root of unity, we can think of the Weil pairing again as a symplectic non-degenerate form with values in $\Z/NZ$. Consider the subgroup $\Gamma_D(N)$ of $\SP_D(\Z)$ consisting of the automorphisms of the pair $(\Lambda, E)$, which induce the trivial action on $\Lambda/N\Lambda$. One can prove that there is a natural bijection between the set of isomorphism classes of polarised abelian varieties of type $D$ equipped with a level $N$ structure and the quotient  $\HH_g/\Gamma_D(N)$. The group $\Gamma_D(N)$ acts properly discontinuously on $\HH_g$, so this quotient has the structure of  a complex analytic orbifold, hence the moduli stack $\a^{an}_{D, [N]}$ can be described as the quotient stack $\a^{an}_{D, [N]}= [\HH_g/\Gamma_D(N)]$.

If $N$ is large enough with respect to the type $D=(d_1, d_2, \ldots ,d_g)$ the group $\SP_D(\Z)$ does not contain torsion elements and acts freely on the Siegel upper half-space $\HH_g$ and we have that
$$\a^{an}_{D, [N]}=\HH_g/\Gamma_D(N)$$
as a quotient being a smooth complex analytic space (cf. \cite{GeN}).\\

Especially again in the case of principally polarised complex abelian varieties, i.e. when $d_g=1$, we get that
$$\a^{an}_{g, [N]}=[\HH_g/\Gamma_{2g}(N)],$$
where the group $\Gamma_{2g}(N)$ is the subgroup of $\SP(2g, \Z)$ corresponding to the symplectic matrices which are congruent to the identity modulo $N$.  And if $N\geq 3$, then we get the quotient
$$\a^{an}_{g, [N]}=\HH_g/\Gamma_{2g}(N),$$
which is a smooth complex analytic space again.

\section{\'Etale and Complex Analytic Homotopy Types for Deligne-Mumford Stacks}

In this section we briefly recall the constructions of \'etale and complex analytic homotopy types for Deligne-Mumford stacks
following the presentation in \cite[Section 2]{fn}, which is based on \cite{AM}, \cite{C} and \cite{Mo}.  

Let $\E$ be a topos with an initial object $\emptyset$. An object $X$ of $\E$ is {\it connected},
if whenever $X=X_1\coprod X_2$, either $X_1$ or $X_2$ is the initial object $\emptyset$.
A topos $\E$ is {\it locally connected} if every object of $\E$ is a coproduct of connected objects.
For a locally connected topos $\E$ define the {\it connected component functor} $\pi$ as the functor
$$\pi: \E \rightarrow (Sets)$$ associating to any object $X$ of $\E$ its set $\pi(X)$ of connected components.

Let $\Delta$ be the category of simplices, i.e. the category whose objects are sets
$[n]=\{0, 1, 2, \ldots n\}$ and whose morphisms are non-decreasing maps. We denote by $\Delta_n$ the
full subcategory of $\Delta$ of all sets $[k]$ with $k\leq n$. A simplicial object in a topos $\E$ is a functor $X_{\bullet}: \Delta^{op} \rightarrow \E$ denoted by $X_{\bullet}=\{X_n\}_{n\geq 0}$.   We will write $\Delta^{op} \E$ for the category of all simplicial objects in $\E$.
The restriction or $n$-truncation functor
$$(-)^{(n)}: \Delta^{op} \E \rightarrow \Delta_n^{op} \E$$
has left and right adjoint functors, the {\it $n$-th skeleton} and {\it $n$-th coskeleton functors}
$$\sk_n, \; \cosk_n: \Delta_n^{op} \E \rightarrow \Delta^{op} \E.$$

\begin{definition}
A {\it hypercovering} of a topos $\E$ is a simplicial object $U_{\bullet}=\{U_n\}_{n\geq 0}$ in $\E$ such that the
morphisms
$$U_0 \rightarrow *$$
$$U_{n+1}\rightarrow \cosk_n(U_{\bullet})_{n+1}$$
are epimorphisms, where $*$ is the final object of $\E$.
\end{definition}

If $S$ is a set and $X$ an object of a topos $\E$ define $S\otimes X:= \coprod_{s\in S}X$.
If $S_{\bullet}$ is a simplicial set and $X_{\bullet}$ a simplicial object of $\E$, define
$$S_{\bullet}\otimes X_{\bullet}: \Delta^{op} \rightarrow \E$$
to be the simplicial object given by $(S_{\bullet}\otimes X_{\bullet})_n:=S_n \otimes X_n.$

Let $\Delta[m] =\Hom_{\Delta}(-, [m])$ be the standard $m$-simplex, i.e. the functor $\Delta[m]: \Delta^{op}
\rightarrow (Sets)$ represented by the object $[m]$.

Two morphisms $f,g: X_{\bullet}\rightarrow Y_{\bullet}$ are called {\it strictly homotopic} if there is a
morphism $H: X_{\bullet} \otimes \Delta[1]\rightarrow Y_{\bullet}$, called {\it strict homotopy}, such that the following diagram
is commutative
$$\diagram
X_{\bullet}=X_{\bullet}\otimes\Delta[0] \dto_{id\otimes d^0} \drrto^{f} &   &  \\
X_{\bullet}\otimes\Delta[1] \rrto^{H}                                        &   & Y_{\bullet} \\
X_{\bullet}=X_{\bullet}\otimes\Delta[0] \uto^{id\otimes d^1} \urrto^{g} &   &   \enddiagram$$

Two morphisms $f,g: X_{\bullet}\rightarrow Y_{\bullet}$ are called {\it homotopic}, if they can be related by a chain
of strict homotopies. Homotopy is the equivalence relation generated by strict homotopy. 

Let $\h\r (\E)$ be the {\it homotopy category of hypercoverings of} $\E$, i.e., the category of hypercoverings of $\E$ with their morphisms up to homotopy.
It can be shown that its opposite category $\h\r (\E)^{op}$ is in fact a filtering category (cf. \cite{AM}).

Let $\h=\h(\Delta^{op}(Sets))$ denote the {\it homotopy category of simplicial sets}, i.e., the category of simplicial sets with their morphisms up to homotopy. 
It turns out that this category is actually equivalent to the classical homotopy category of CW-complexes and their maps up to homotopy (cf. \cite{GJ}, Chap. I or \cite{BK}, VIII.3). 

In addition let $\pro-\h$ denote the category of pro-objects in the category $\h$, i.e. the category of (contravariant) functors
$X: \I \rightarrow \h$ from a filtering index category $\I$ to $\h$. We will write normally $X=\{X_i\}_{i\in \I}$ to
indicate that we think of pro-objects $X$ as inverse systems of objects of $\h$ (cf. \cite[Appendix]{AM}).

Let $\E$ be a locally connected topos. The connected component functor $\pi: \E \rightarrow (Sets)$ induces a functor
$$\Delta^{op}\pi: \Delta^{op}\E \rightarrow \Delta^{op}(Sets).$$
\noindent If we pass to homotopy categories and restrict to hypercoverings of $\E$ we obtain a functor
$$\pi: \h\r(\E) \rightarrow \pro-\h.$$

Finally we can define the Artin-Mazur homotopy type of a locally connected topos, following \cite[\S 9]{AM}, \cite{Mo}.

\begin{definition}
Let $\E$ be a locally connected topos. The {\it Artin-Mazur homotopy type} of $\E$ is the pro-object
$$\{\E\}= \{\pi(U_{\bullet})\}_{U_{\bullet}\in \h\r(\E)}.$$
in the homotopy category of simplicial sets. The Artin-Mazur homotopy type is functorial with respect to morphisms of topoi, the associated functor
$\{-\}$ is called the {\it Verdier functor}.
\end{definition}
If $\E$ is a locally connected topos and $x$ a point of the topos $\E$, i.e. a morphism of topoi
$x: (Sets) \rightarrow \E$,
we can also define the {\it homotopy pro-groups} $\pi_n(\E, x)$ of $\E$ for $n\geq 1$.
(cf. \cite{AM}, \cite{Mo} and \cite{Z}). 

Now we will recall the definition of the Artin-Mazur \'etale homotopy type for the topos of sheaves on the small \'etale site
of an algebraic Deligne-Mumford stack (cf. \cite[Section 2.2]{fn}). For the general theory of algebraic stacks we refer again 
to \cite{LMB} and \cite{SP}. 

Let $\s$ be an algebraic Deligne-Mumford stack over the big \'etale site of the category of schemes $(Sch/B)$ over a fixed base scheme $B$. 
Then we can consider its small \'etale site $\s _{et}$ (cf. \cite{LMB}, \cite{fn}). The objects are \'etale $1$-morphisms $x: X\rightarrow \s$,
where $X$ is a scheme, morphisms are morphisms over $\s$, i.e. $2$-commutative
diagrams of the form
$$\diagram
   X \rrto^f \drto_x&               & Y\dlto^y \\
                 & \s &                 \enddiagram $$                
\noindent with $f: X\rightarrow Y$ a $1$-morphism, together with a $2$-isomorphism $\alpha: x\Rightarrow y\circ f$. The coverings are the \'etale coverings of the schemes involved.

We let $\E_{et}=\hf\vf(\s_{et})$ be the associated \'etale topos, i.e. the category of
sheaves on the small \'etale site $\s _{et}$ of the algebraic stack $\s$. It turns out that this topos $\E_{et}$ is
in fact a locally connected topos (cf. \cite[3.1]{Z}).

We define the \'etale homotopy type of an algebraic Deligne-Mumford stack as the Artin-Mazur homotopy
type of its \'etale topos (cf. \cite{fn}). 

\begin{definition}
Let $\s$ be an algebraic Deligne-Mumford stack. The {\it \'etale homotopy type} $\{\s\}_{et}$ of $\s$ is the
pro-object in the homotopy category of simplicial sets given as
$$\{\s\}_{et}=\{\E_{et}\}.$$
The {\it \'etale homotopy groups} of the stack $\s$ are defined as the Artin-Mazur homotopy pro-groups $\pi_n^{et}(\s, x)=\pi_n(\E_{et}, x)$
of the topos $\E_{et}$.
\end{definition}

For $n=1$ this gives the \'etale fundamental group $\pi_1^{et}(\s, x)$ of the algebraic stack $\s$, which is discussed in 
detail also in Noohi \cite{No} and Zoonekynd \cite{Z}.

As was shown in \cite[Section 2.2]{fn} we can determine the \'etale homotopy type of a Deligne-Mumford stack $\s$ directly from that of a hypercovering $X_{\bullet}$ of the \'etale topos $\E_{et}$, i.e. from a simplicial scheme over $\s$ which can often be constructed directly for example from a given \'etale atlas of the Deligne-Mumford stack. This is done using the following generalisation of the homotopy descent theorem for simplicial schemes of Cox \cite[Theorem IV.2]{C} (cf. also \cite[Theorem 3]{O}). The \'etale homotopy type $\{X_{\bullet}\e$ of the simplicial scheme $X_{\bullet}$ is defined as the Artin-Mazur pro-homotopy type of the locally connected \'etale topos $\hf\vf(X_{\bullet})_{et}$ of \'etale sheaves on $X_{\bullet}$, i.e. $\{X_{\bullet}\e= \{\hf\vf(X_{\bullet})_{et}\}$ (cf. Cox \cite[Chap. I \& III]{C} and \cite{F}).

We have the following stacky interpretation \cite[Theorem 2.3]{fn} (cf. also \cite[Theorem 3]{O}) of  the homotopy descent theorem for simplicial schemes of Cox \cite[Theorem IV.2]{C}. 

\begin{theorem}
\label{descthm}
If $\s$ is an algebraic Deligne-Mumford stack with small \'etale site $\s_{et}$ and
$X_{\bullet}$ a simplicial scheme which is a hypercovering of the \'etale topos $\E_{et}$,
then there is a weak homotopy equivalence of pro-simplicial sets
$$\{\s\e \simeq \{X_{\bullet}\e.$$
where $\{X_{\bullet}\e$ is the \'etale homotopy type of the simplicial scheme $X_{\bullet}$.
\end{theorem}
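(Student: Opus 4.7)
The goal is a cofinality/descent argument: both pro-objects are indexed by filtering categories of hypercoverings, and the hypercovering $X_\bullet$ lets us exhibit a cofinal subsystem on the stack side that matches the indexing system of the simplicial-scheme \'etale homotopy type. First I would unwind both sides: $\{\s\}_{et}$ is by definition the pro-object $\{\pi(U_\bullet)\}_{U_\bullet \in \h\r(\E_{et})}$, while $\{X_\bullet\}_{et} = \{\hf\vf(X_\bullet)_{et}\}$ is the pro-object indexed by (homotopy classes of) hypercoverings of the locally connected \'etale topos of the simplicial scheme $X_\bullet$. Cox's original descent theorem IV.2 gives precisely such an equivalence when $X_\bullet$ is an augmented hypercovering of a scheme, and the plan is to extend this verbatim to the stacky target using that a Deligne-Mumford stack admits an \'etale atlas.

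The first substantive step is to construct a canonical morphism of topoi $a: \hf\vf(X_\bullet)_{et} \to \E_{et}$ coming from the augmentation $X_\bullet \to \s$, and to check that its direct and inverse image functors preserve hypercoverings and the connected component functor $\pi$. Once this is in place, I would show that pullback along $a$ gives a functor between the hypercovering categories; the key lemma is that because $X_\bullet$ is itself a hypercovering of $\E_{et}$, any hypercovering $U_\bullet$ of $\E_{et}$ can be refined by the diagonal of the bisimplicial object $U_\bullet \times_{\s} X_\bullet$ (formed levelwise using that hypercoverings are stable under fiber product over the final object). This produces a hypercovering of $\E_{et}$ which, by construction, factors through $X_\bullet$ and is therefore pulled back from a hypercovering of $\hf\vf(X_\bullet)_{et}$. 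A standard Verdier/Artin--Mazur cofinality argument then shows that the system of such refinements is cofinal in $\h\r(\E_{et})^{op}$.

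Next I would invoke the compatibility of $\pi$ with this pullback: for a hypercovering $V_\bullet$ of $\hf\vf(X_\bullet)_{et}$ and its image $a_!V_\bullet$ in $\h\r(\E_{et})$, the simplicial sets $\pi(V_\bullet)$ and $\pi(a_!V_\bullet)$ are naturally isomorphic, because connected components of sheaves over a simplicial scheme agree with connected components of the corresponding sheaves on the stack obtained by gluing along $X_\bullet$. Combining this with the cofinality from the previous step gives an equivalence of the two pro-simplicial sets in the filtering pro-category, hence a weak equivalence in $\pro\mbox{-}\h$.

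The main obstacle, I expect, will be the bookkeeping in the cofinality/bisimplicial step: verifying that the diagonal of $U_\bullet \times_{\s} X_\bullet$ really satisfies the Artin--Mazur hypercovering conditions $U_0 \to \ast$ and $U_{n+1} \to \cosk_n(U_\bullet)_{n+1}$ epimorphic in the topos $\E_{et}$, and that two different refinements yield strictly homotopic maps after applying $\pi$, so that the construction descends to the homotopy category $\h\r(\E_{et})$. This is exactly the content of Cox's original argument; the only genuinely new input is that the \'etale topos of a Deligne-Mumford stack is locally connected (as recorded via Zoonekynd), so that $\pi$ is defined and well-behaved, which reduces the stacky case to Cox's scheme-theoretic result.
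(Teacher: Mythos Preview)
The paper does not actually supply a proof of this theorem: it is stated as a direct import of \cite[Theorem~2.3]{fn} (the authors' earlier paper), together with Cox \cite[Theorem~IV.2]{C} and Oda \cite[Theorem~3]{O}, and is used as a black box thereafter. So there is no in-paper argument to compare your proposal against line by line.

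That said, your outline is a faithful reconstruction of how the Cox descent argument is adapted to the stacky setting in those references. The three ingredients you isolate---the augmentation morphism of topoi $a:\hf\vf(X_\bullet)_{et}\to\E_{et}$, the cofinality of hypercoverings refined via the bisimplicial object $U_\bullet\times_\s X_\bullet$, and the compatibility of the connected-component functor $\pi$ across the two indexing systems---are exactly the moving parts of Cox's Theorem~IV.2, and you correctly note that the only genuinely new input beyond the scheme case is the local connectedness of $\E_{et}$ for a Deligne--Mumford stack (supplied by \cite{Z}). Your identification of the bookkeeping in the bisimplicial/cofinality step as the technical crux is also accurate. In short: your plan is correct and aligns with the approach of the cited sources; the paper itself simply quotes the result.
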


Therefore the determination of the \'etale homotopy type $\{\s\e$ of an algebraic Deligne-Mumford stack $\s$ is reduced to the determination of the \'etale homotopy type of a simplicial scheme $X_{\bullet}$ being any hypercovering of the \'etale topos $\E_{et}$ and this does not depend on the particular hypercovering (cf. \cite{C}, \cite{O}, \cite{fn}).

Cox \cite[\S 4]{C} also showed that there is an alternative and more direct way to define the \'etale homotopy type of a simplicial scheme $\{X_{\bullet}\}_{et}$ and therefore to determine the \'etale homotopy type of an algebraic Deligne-Mumford stack. Namely, let $U_{\bullet \bullet}$ be a hypercovering of the \'etale topos $\hf\vf(X_{\bullet})_{et}$ of the simplicial scheme $X_{\bullet}$, which is a bisimplicial object $U_{\bullet \bullet}=\{U_{m n}\}_{m, n}$. Applying the connected component functor to each of the $U_{m n}$ gives a bisimplicial object $\pi(U_{\bullet \bullet})$ and taking its diagonal we get a simplicial set $\Delta \pi(U_{\bullet \bullet})$. Running through all hypercoverings
$U_{\bullet \bullet}\in \h\r(\hf\vf(X_{\bullet})_{et})$ gives a pro-object $\{\Delta \pi(U_{\bullet \bullet})\}\in \pro-\h$ in the homotopy category of simplicial sets. Cox \cite[Theorem III.8]{C} showed that there is a weak equivalence
$$\{X_{\bullet}\}_{et}\simeq \{\Delta \pi(U_{\bullet \bullet})\}$$
in $\pro-\h$.

Similarly we can consider Artin-Mazur pro-homotopy types of analytic Deligne-Mumford stacks in the context of complex analytic spaces (cf. \cite[Section 2.3]{fn}). We also refer to \cite[Chap. 5]{T} for the general theory and properties of complex analytic stacks and analytifications of algebraic stacks.

Let $\t$ be an analytic Deligne-Mumford stack over the big site of local analytic isomorphisms of the category of complex analytic spaces (AnSp/C) over a base complex analytic space $C$. Local analytic isomorphism between complex analytic spaces here play the same role as \'etale maps between schemes (cf. \cite{Gr}).

For a given analytic Deligne-Mumford stack $\t$ we can define its small site of local analytic isomorphisms $\t_{cl}$ given by local analytic isomorphisms $x: X\rightarrow \t$, where $X$ is a complex analytic space and morphisms are morphisms of complex analytic spaces over
$\t$ and the coverings are given by families of local analytic isomorphisms (cf. \cite{M},
\cite[Chap. IV, \S3]{C}). 
Let $\E_{cl}=\hf\vf(\t_{cl})$ be the associated topos of local analytic isomorphisms, i.e. the category of
sheaves on the small site of local analytic isomorphisms of the analytic Deligne-Mumford stack $\t$. It is again a locally connected topos
and we can define the complex analytic homotopy type of $\t$ in the sense of Artin-Mazur formally as before.

\begin{definition}
Let $\t$ be an analytic Deligne-Mumford stack. The {\it complex analytic} or {\it classical homotopy type} $\{\t\}_{cl}$ of $\t$ is the
pro-object in the homotopy category of simplicial sets given as
$$\{\t\}_{cl}=\{\E_{cl}\}.$$
The {\it classical homotopy groups} of the stack $\t$ are defined as the Artin-Mazur homotopy pro-groups $\pi_n^{cl}(\t, x)=\pi_n(\E_{cl}, x)$
of the topos $\E_{cl}$.
\end{definition}

Also in the complex analytic setting we have an explicit description of the classical homotopy type $\{\t\}_{cl}$ of an analytic Deligne-Mumford stack using hypercoverings of the topos $\E_{cl}$. Namely, if $\t$ is an analytic Deligne-Mumford stack and $X_{\bullet}$ is a simplicial analytic
space, which is also a hypercovering of the topos $\E_{cl}$, then homotopy descent gives a weak homotopy equivalence of pro-simplicial sets
$$\{\t\}_{cl} \simeq \{X_{\bullet}\}_{cl}$$
\noindent where $\{X_{\bullet}\}_{cl}$ is the classical pro-homotopy type of the simplicial complex analytic space $X_{\bullet}$ (cf. \cite{fn}, \cite[\S 3]{C}. But in the case of complex analytic spaces it turns out that this pro-homotopy type $\{X_{\bullet}\}_{cl}$ is in fact just an ordinary homotopy type as we can see as follows.
Following the same arguments as in the algebraic setting, Cox \cite[\S 3]{C} also showed that there is a weak equivalence 
$$\{X_{\bullet}\}_{cl}\simeq \{\Delta \pi(U_{\bullet \bullet})\}$$
in $\pro-\h$, where $U_{\bullet \bullet}$ is a hypercovering of the topos $\hf\vf(X_{\bullet})_{cl}$ of the simplicial complex analytic space $X_{\bullet}$.
But Cox \cite[Proposition IV.9]{C} then proves that in the complex analytic situation $\{\Delta \pi(U_{\bullet \bullet})\}$ gives actually an ordinary homotopy type, namely there is a weak equivalence 
$$\{\Delta \pi(U_{\bullet \bullet})\}\simeq \Delta \Sin (X_{\bullet})$$
in $\pro-\h$, where $\Delta \Sin (X_{\bullet})$ is the simplicial set given by taking the diagonal $\Delta$ of the bisimplicial set $\Sin (X_{\bullet})$ 
given in bidegree $s,t$ by $\Sin (X_s)_t$ and where $\Sin$ is the singular functor (cf. \cite[Chap. IV, \S3]{C} and \cite[Chap. 8]{F}). Furthermore there is a canonical weak equivalence
$$\Delta \Sin (X_{\bullet})\stackrel{\simeq}\longrightarrow \Sin (|X_{\bullet}|),$$
where $|X_{\bullet}|$ denotes the geometric realisation of the simplicial analytic space $X_{\bullet}$. From this we get finally a weak equivalence
$$\{\t\}_{cl} \simeq \Sin (|X_{\bullet}|).$$
\noindent So the classical homotopy type of an analytic Deligne-Mumford stack $\t$ is given as the ordinary homotopy type of any hypercovering. Again this construction is independent of the actual choice of the hypercovering $X_{\bullet}$ (cf. \cite{C}, \cite{O}, \cite{fn}).\\

Now we fix an embedding $\bar{\Q}\hookrightarrow \C$ of the algebraic closure of the rational numbers in the complex numbers. 
For a given algebraic Deligne-Mumford stack $\s$ over $\Spec(\bar{\Q})$
let $\s^{an}$ be the associated analytic Deligne-Mumford stack (cf. \cite[Chap. 5]{T}).
Similarly for any scheme $X$ over $\Spec(\bar{\Q})$, let $X^{an}$ denote the complex
analytic space associated with the $\C$-valued points $X(\C)$ of the scheme $X$.

Both the \'etale and the classical homotopy type are determined by a hypercovering of their respective topoi 
and we recall the following general comparison theorem between \'etale and classical homotopy types of Deligne-Mumford stacks
\cite[Theorem 2.4]{fn}, which is a consequence of the comparison theorem for homotopy types of simplicial schemes
by Cox \cite[Theorem IV.8]{C} and Friedlander \cite[Theorem 8.4]{F}.

\begin{theorem}
\label{homdesc}
Let $\s$ be an algebraic Deligne-Mumford stack over $\bar{\Q}$ and
$X_{\bullet}$ a simplicial scheme of finite type over $\bar{\Q}$ which is a hypercovering
of the topos $\E_{et}=\hf\vf(\s_{et})$. If $X^{an}_{\bullet}$ is the associated simplicial
complex analytic space of $X_{\bullet}$, then there is a weak homotopy equivalence of
pro-simplicial sets
$$\{\s\}^{\wedge}_{et}\simeq \Sin (|X^{an}_{\bullet}|)^{\wedge}.$$
\noindent where $^{\wedge}$ denotes the Artin-Mazur profinite completion functor on the homotopy category 
of simplicial sets.
\end{theorem}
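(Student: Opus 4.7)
The plan is to chain together three weak equivalences, applying the Artin--Mazur profinite completion functor $(-)^{\wedge}$ at each stage. The first equivalence is internal to the algebraic side, replacing the stack $\s$ by the simplicial scheme $X_{\bullet}$; the second is the Cox--Friedlander comparison, which crosses from algebraic geometry to complex analytic geometry at the level of simplicial schemes and spaces; the third is internal to the analytic side and identifies the pro-homotopy type of $X^{an}_{\bullet}$ with an honest (singular) homotopy type.

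More concretely, I would first invoke Theorem \ref{descthm} to obtain a weak equivalence $\{\s\}_{et} \simeq \{X_{\bullet}\}_{et}$ in $\pro$-$\h$. Since the profinite completion functor $(-)^{\wedge}$ is functorial on the pro-homotopy category, this yields $\{\s\}^{\wedge}_{et}\simeq \{X_{\bullet}\}^{\wedge}_{et}$. Next, I would appeal to the Cox--Friedlander comparison theorem for simplicial schemes of finite type over the algebraically closed subfield $\bar{\Q}\subset \C$, which supplies a weak equivalence $\{X_{\bullet}\}^{\wedge}_{et}\simeq \{X^{an}_{\bullet}\}^{\wedge}_{cl}$ in $\pro$-$\h$ (cf.\ Cox \cite[Thm.~IV.8]{C} and Friedlander \cite[Thm.~8.4]{F}). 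Finally, I would apply the chain of weak equivalences on the analytic side recalled immediately before the theorem: for a hypercovering $U_{\bullet\bullet}$ of $\hf\vf(X^{an}_{\bullet})_{cl}$, one has $\{X^{an}_{\bullet}\}_{cl} \simeq \{\Delta \pi(U_{\bullet\bullet})\} \simeq \Delta \Sin(X^{an}_{\bullet}) \stackrel{\simeq}{\longrightarrow} \Sin(|X^{an}_{\bullet}|)$. Profinitely completing this chain and concatenating with the previous two equivalences yields the stated formula $\{\s\}^{\wedge}_{et}\simeq \Sin(|X^{an}_{\bullet}|)^{\wedge}$.

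The main obstacle is the middle step, the Cox--Friedlander comparison. It is precisely here that the hypothesis that $X_{\bullet}$ be of \emph{finite type} over $\bar{\Q}$ plays an essential role: the underlying comparison between \'etale cohomology with finite coefficients and singular cohomology of the analytification, which powers the homotopy-theoretic form of Artin's comparison theorem, requires this finiteness in each simplicial degree. One then has to check that these degreewise comparisons assemble coherently into an equivalence of bisimplicial objects, so that passing to diagonals and to pro-homotopy types gives a well-defined map in $\pro$-$\h$ and that profinite completion preserves the cofinality arguments used to compare the two inverse systems of hypercoverings. Granting these foundational results, which are the content of the Cox and Friedlander theorems cited, the remaining verifications reduce to functoriality of $(-)^{\wedge}$ and unwinding the definitions of the \'etale and classical homotopy types of $\s$ and $\s^{an}$ respectively.
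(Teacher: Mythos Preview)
Your proposal is correct and follows essentially the same three-step approach the paper indicates (and spells out in the proof of Theorem~\ref{etho}): homotopy descent via Theorem~\ref{descthm}, the Cox--Friedlander comparison for simplicial schemes, and the identification of the classical pro-homotopy type with $\Sin(|X^{an}_{\bullet}|)$. The only cosmetic difference is that the paper inserts an explicit base-change step $\{X_{\bullet}\}^{\wedge}_{et}\simeq \{X_{\bullet}\otimes_{\bar{\Q}}\C\}^{\wedge}_{et}$ via \cite[Cor.~12.12]{AM} before invoking Cox's Theorem~IV.8, whereas you absorb this into the comparison step by working directly over $\bar{\Q}\subset\C$; both are fine.
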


This comparison theorem now allows to determine the \'etale homotopy types over $\bar{\Q}$ of the moduli stacks of polarised abelian schemes we introduced in the last section. 

\section{\'Etale homotopy types of moduli stacks of polarised abelian schemes}

We will now prove our main theorem on the \'etale homotopy type of the moduli stacks of polarised abelian schemes using the general comparison theorem for \'etale homotopy types as outlined in the last section.

Let $g\geq 1$ be a positive integer and $D=(d_1, d_2, \ldots d_g)$ be a tuple of integers with $d_1|d_2|\ldots |d_g$. We want to study the \'etale homotopy type of the moduli stack $\a_D$ of abelian schemes with polarisations of type $D$. 

We denote by ${\a}_{D} \otimes \bar{\Q}$ the restriction of the moduli stack ${\a}_{D}$ to the category of schemes over $\bar{\Q}$, i.e. the extension of scalars of $\a_D$ to the algebraic closure  $\bar{\Q}$ of the field $\Q$ of rational numbers.

We also fix an embedding $\bar{\Q} \hookrightarrow {\C}$ of the algebraic closure of the rationals in the complex numbers.
The analytic stack ${\a}_{D}^{an}$ is precisely the complex analytification 
$({\a}_{D}\otimes\bar{\Q})^{an}$ of the Deligne-Mumford stack ${\a}_{D}\otimes\bar{\Q}$ and
we can now determine the \'etale homotopy type of ${\a}_{D}\otimes\bar{\Q}$.

Let us first determine the classical homotopy type of the complex analytification $\a_D^{an}$, which as we have discussed in the last section is given by an ordinary homotopy type of a simplicial set. 

\begin{theorem}
\label{anaho} 
Let $g\geq 1$ be a positive integer and $D=(d_1, d_2, \ldots d_g)$ be a tuple of integers with $d_1|d_2|\ldots |d_g$. 
There is a weak homotopy equivalence of simplicial sets 
$$\{\a_D^{an}\}_{cl} \simeq \Sin(|\BB\SP_D(\Z)|).$$
In particular for principal polarisations we have that
$$\{\a_g^{an}\}_{cl} \simeq \Sin(|\BB\SP(2g, \Z)|).$$
\end{theorem}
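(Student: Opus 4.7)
The plan is to exploit the complex-analytic uniformisation $\a_D^{an} = [\HH_g/\SP_D(\Z)]$ recalled in Section~1, together with the contractibility of $\HH_g$, and feed this into the classical-homotopy-type description established at the end of Section~2, namely $\{\t\}_{cl} \simeq \Sin(|X_{\bullet}|)$ for any simplicial analytic hypercovering $X_{\bullet}$ of the analytic Deligne-Mumford stack $\t$.

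First I would produce a convenient hypercovering. The quotient map $\HH_g \to [\HH_g/\SP_D(\Z)] = \a_D^{an}$ is a local analytic isomorphism in the stacky sense (an atlas of the analytic Deligne-Mumford stack), because $\SP_D(\Z)$ acts properly discontinuously. Its \v{C}ech nerve is the simplicial analytic space $X_{\bullet}$ with
$$X_n = \HH_g \times_{\a_D^{an}} \HH_g \times_{\a_D^{an}} \cdots \times_{\a_D^{an}} \HH_g \;\cong\; \HH_g \times \SP_D(\Z)^n,$$
equipped with the standard face and degeneracy maps of the action (or translation) groupoid of $\SP_D(\Z)$ acting on $\HH_g$. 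Standard facts about analytic Deligne-Mumford quotient stacks (following the presentation of Cox and of \cite{fn}) guarantee that $X_{\bullet}$ is a hypercovering of the topos $\E_{cl} = \hf\vf((\a_D^{an})_{cl})$.

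Next I would apply the chain of weak equivalences recorded in Section~2: homotopy descent gives $\{\a_D^{an}\}_{cl} \simeq \{X_{\bullet}\}_{cl}$, and Cox's identification in the analytic setting gives $\{X_{\bullet}\}_{cl} \simeq \Sin(|X_{\bullet}|)$. It remains to identify $|X_{\bullet}|$ up to weak equivalence. The geometric realisation of the nerve of the translation groupoid of a discrete group $G$ acting on a space $Y$ is precisely the Borel construction $EG \times_G Y$ (this is the standard identification of the realisation of $Y \times G^{\bullet}$ with the homotopy orbit space). Hence
$$|X_{\bullet}| \;\simeq\; E\SP_D(\Z) \times_{\SP_D(\Z)} \HH_g.$$
Since $\HH_g$ is contractible (it is biholomorphic to a bounded symmetric domain, hence a cell), the projection to the first factor gives a weak equivalence $E\SP_D(\Z) \times_{\SP_D(\Z)} \HH_g \simeq B\SP_D(\Z) = |\BB\SP_D(\Z)|$, and applying $\Sin$ to both sides yields the stated equivalence. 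The principally polarised case $D = (1,\ldots,1)$ is then just the specialisation $\SP_D(\Z) = \SP(2g,\Z)$.

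The step I expect to require the most care is verifying that the action groupoid of a discrete, properly discontinuous but not free action really does produce a hypercovering of the topos $\E_{cl}$ of $[\HH_g/\SP_D(\Z)]$ and that its realisation genuinely models the Borel construction; the non-freeness forces one to work stack-theoretically rather than directly with the naive orbit space $\HH_g/\SP_D(\Z)$. Once this is in place, the contractibility of the Siegel upper half space turns the Borel construction into $B\SP_D(\Z)$ essentially for free.
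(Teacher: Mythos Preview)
Your proposal is correct and follows essentially the same approach as the paper: take the \v{C}ech nerve of the atlas $\HH_g \to [\HH_g/\SP_D(\Z)]$, identify its levels as $\HH_g \times \SP_D(\Z)^n$, and use contractibility of $\HH_g$ together with the homotopy descent machinery of Section~2. Your identification of the realisation with the Borel construction $E\SP_D(\Z)\times_{\SP_D(\Z)}\HH_g$ is in fact the cleaner way to phrase the final step; the paper's main argument writes the realisation as a product $\HH_g \times \BB\SP_D(\Z)$ (which glosses over the action in the face maps), but then records exactly your line of reasoning as an alternative argument suggested by the referee, via the levelwise $G$-equivariant weak equivalence $\HH_g \to *$.
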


\begin{proof}
Since the moduli stack $\a_D^{an}$ is given as the quotient stack 
$$\a^{an}_D=[\HH_g/\SP_D(\Z)],$$ we have the following cartesian diagram
$$\diagram
\Isom(\pi, \pi)= \HH_g \times_{{\a}_{D}^{an}} \HH_g \rto \dto &     \HH_g \dto^{\pi} \\
\HH_g \rto^{\pi} &            {\a}_{D}^{an}  \enddiagram $$ 
where the morphism $\pi$ is an atlas, which is a local analytic isomorphism and both projections from $\Isom(\pi, \pi)= \HH_g \times_{{\a}_{D}^{an}} \HH_g$ are local analytic isomorphisms. The atlas $\pi: \HH_g \rightarrow  {\a}_{D}^{an}$ corresponds to the trivial principal $\SP_D(\Z)$-bundle $\HH_g \times \SP_D(\Z)$ together with the $\SP_D(\Z)$-equivariant morphism given by the right action $\rho: \SP_D(\Z) \times \HH_g  \rightarrow \HH_g$ (cf. \cite{DM}, \cite{Go}). Therefore we have 
$$\Isom(\pi, \pi)= \HH_g \times_{{\a}_{D}^{an}} \HH_g  \cong \HH_g \times \SP_D(\Z).$$
By induction on the number of factors, we get an isomorphism
$$\HH_g \times_{{\a}_{D}^{an}}\HH_g\hdots \times_{{\a}_{D}^{an}} \HH_g \cong \HH_g \times \SP_D(\Z) \times \hdots \times \SP_D(\Z).$$
This allows us to determine the classical homotopy type of the uniformisation ${\a}_{D}^{an}$. 
Let $\cosk_0^{{\a}_{D}^{an}}(\HH_g)_{\bullet}$ be the relative \v{C}ech nerve. It is a hypercovering associated to the atlas $\pi: \HH_g\rightarrow {\a}_{D}^{an}.$
The induction above shows that, the $m$-simplex $\cosk_0^{{\a}_{D}^{an}}(\HH_g)_m$ of the
\v{C}ech nerve $\cosk_0^{{\a}_{D}^{an}}(\HH_g)_{\bullet}$ is given by the $(m +1)$-tuple
fiber product of copies of $\HH_g$ over ${\a}_{D}^{an}$, so we have:
$$\cosk_0^{{\a}_{D}^{an}}(\HH_g)_m \cong \HH_g \times \SP_D(\Z) \times \SP_D(\Z) \times \cdots \SP_D(\Z).$$
And by definition, we get
$$\cosk_0^{{\a}_{D}^{an}}(\HH_g)_{\bullet}\cong \HH_g \times \Nn(\SP_D(\Z))_{\bullet}$$
where the simplicial set $\Nn(\SP_D(\Z))_{\bullet}$ is the simplicial nerve of the group $\SP_D(\Z)$, viewed as a category with one object and with $\SP_D(\Z)$ as the set of morphisms. The geometric realisation of  $\Nn(\SP_D(\Z))_{\bullet}$ is the classifying space $\BB\SP_D(\Z)$. Therefore after geometric realisation we have a homeomorphism of topological spaces
$$|\cosk_0^{{\a}_{D}^{an}}(\HH_g)_{\bullet}|\cong \HH_g \times \BB\SP_D(\Z).$$

The Siegel upper half  space $\HH_g=\SP(2g, \R)/U(g)$ is a homogeneous space given as the quotient $\SP(2g, \R)/U(g)$, where the unitary group $U(g)$ is a maximal compact subgroup of $\SP(2g, \R)$. Moreover the Siegel upper half space $\HH_g$ is contractible (cf. \cite[2. 20]{McDS}). Therefore we finally conclude that there is a homotopy equivalence
$$|\cosk_0^{{\a}_{D}^{an}}(\HH_g)_{\bullet}| \simeq\BB\SP_D(\Z)$$
which determines the classical homotopy type of the uniformisation $\a^{an}_D$ of the moduli stack $\a_D.$

In the case that the polarisations are principal, i.e. $d_g=1$ we have especially
$$|\cosk_0^{{\a}_{g}^{an}}(\HH_g)_{\bullet}| \simeq\BB\SP(2g, \Z)$$
determining the classical homotopy of the uniformisation $\a^{an}_g$ of the moduli stack $\a_g$ of principally polarised abelian schemes.

As the referee pointed out we could also argue in a more systematic way as follows: Let $G$ be a discrete group and assume we have a $G$-equivariant weak equivalence $f: X\stackrel{\simeq}\rightarrow Y$ of topological spaces. Let $f_{\bullet}: X_{\bullet}\rightarrow Y_{\bullet}$ be the induced map of simplicial sets, where $X_{\bullet}=\{X_n\}_n$ with $X_n=X\times G\times G\times\cdots\times G$ and $Y_{\bullet}=\{Y_n\}_n$ with $Y_n=Y\times G\times G\times\cdots\times G$ both featuring $n$ copies of $G$. Then it follows that for each $n$ the map $f_n: X_n\stackrel{\simeq}\rightarrow Y_n$ is also a weak equivalence and therefore that the induced map $\Delta\Sin(X_{\bullet})\stackrel{\simeq}\rightarrow \Delta\Sin(Y_{\bullet})$ is a weak equivalence of simplicial sets (cf. \cite[p. 94]{Q}, \cite[Chap. IV, Prop. 1.7]{GJ}, \cite[Theorem B.2]{BF} or \cite{BK}). So finally we end up with the weak equivalence $\Sin (|X_{\bullet}|)\simeq \Sin (|Y_{\bullet}|)$ of simplicial sets using the considerations of the previous section on homotopy types. Now applying this to the special case where $G=\SP_D(\Z)$, $X=\HH_g$ and $Y$ being a point gives weak equivalences
$$\{\a_g^{an}\}_{cl} \simeq \Sin(|(\HH_g)_{\bullet}|)\simeq \Sin(|\BB\SP_D(\Z)|)$$
of simplicial sets.
\end{proof}

Now we can determine the \'etale homotopy types of the moduli stacks $\a_D$ using the comparison theorem (Theorem \ref{homdesc}).

\begin{theorem}
\label{etho}
Let $g\geq 1$ be a positive integer and $D=(d_1, d_2, \ldots d_g)$ be a tuple of integers with $d_1|d_2|\ldots |d_g$. 
There is a weak homotopy equivalence of pro-simplicial sets
$$\{\a_D \otimes\bar{\Q} \}^{\wedge}_{et} \simeq K(\SP_D (\Z), 1)^{\wedge}.$$
where $^{\wedge}$ denotes Artin-Mazur profinite completion.
In particular for principal polarisations we have that
$$\{\a_g \otimes\bar{\Q} \}^{\wedge}_{et} \simeq K(\SP (2g, \Z), 1)^{\wedge}.$$
\end{theorem}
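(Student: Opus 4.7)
The plan is to combine the classical homotopy computation of Theorem \ref{anaho} with the comparison theorem (Theorem \ref{homdesc}) between \'etale and classical homotopy types. Concretely, I would choose an algebraic hypercovering of $\a_D\otimes\bar{\Q}$ by a simplicial scheme of finite type, translate the \'etale computation into a classical one via Theorem \ref{homdesc}, and then invoke the independence of the classical homotopy type under the choice of hypercovering in order to bring Theorem \ref{anaho} to bear.

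For the first step, pick an integer $N\geq 3$ coprime to $d_g$ that is large enough with respect to $D$ so that $\a_{D, [N]}\otimes\bar{\Q}$ is a smooth quasi-projective scheme over $\bar{\Q}$; such an $N$ exists by the theorem on moduli stacks with level structures in Section 1. The forgetful morphism $p\colon \a_{D, [N]}\otimes\bar{\Q} \to \a_D\otimes\bar{\Q}$ is finite \'etale and surjective, and hence provides an \'etale atlas by a scheme. Form the relative \v{C}ech nerve $X_{\bullet}:=\cosk_0^{\a_D\otimes\bar{\Q}}(\a_{D, [N]}\otimes\bar{\Q})_{\bullet}$; this is a simplicial scheme of finite type over $\bar{\Q}$ which is a hypercovering of the \'etale topos $\hf\vf((\a_D\otimes\bar{\Q})_{et})$.

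Applying Theorem \ref{homdesc} I then obtain a weak homotopy equivalence of pro-simplicial sets
$$\{\a_D\otimes\bar{\Q}\}^{\wedge}_{et}\simeq \Sin(|X_{\bullet}^{an}|)^{\wedge}.$$
Since analytification of Deligne-Mumford stacks sends \'etale morphisms to local analytic isomorphisms and commutes with fibre products, the simplicial analytic space $X_{\bullet}^{an}$ is a hypercovering of the analytic topos $\hf\vf(\a_D^{an})_{cl}$. The homotopy descent result for analytic Deligne-Mumford stacks recalled in Section 2 then gives $\Sin(|X_{\bullet}^{an}|)\simeq \{\a_D^{an}\}_{cl}$. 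Combining this with the identification $\{\a_D^{an}\}_{cl}\simeq \Sin(|\BB\SP_D(\Z)|)$ of Theorem \ref{anaho}, and noting that $\BB\SP_D(\Z)$ is an Eilenberg--MacLane space $K(\SP_D(\Z),1)$ because $\SP_D(\Z)$ is a discrete group, profinite completion yields
$$\{\a_D\otimes\bar{\Q}\}^{\wedge}_{et}\simeq K(\SP_D(\Z), 1)^{\wedge}.$$
The principal case $d_g=1$ follows by specialising to $D=(1, \ldots, 1)$, in which case $\SP_D(\Z)=\SP(2g, \Z)$.

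The main obstacle, once the algebraic hypercovering $X_{\bullet}$ is in place, is to justify rigorously that the classical pro-homotopy type $\{\a_D^{an}\}_{cl}$ computed via the analytic \v{C}ech nerve of $\HH_g\to \a_D^{an}$ used in Theorem \ref{anaho} agrees with the one computed via $X_{\bullet}^{an}$. This independence of hypercovering is part of the Artin-Mazur framework, but verifying that analytification is compatible with the formation of \v{C}ech nerves and that hypercoverings of the \'etale topos analytify to hypercoverings of the classical topos requires some GAGA-type input for finite-type schemes over $\bar{\Q}$ and their analytifications. Once this bridge between the algebraic and analytic hypercoverings is established, the rest of the argument is a direct concatenation of Theorems \ref{homdesc} and \ref{anaho}.
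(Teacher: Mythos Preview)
Your proposal is correct and follows essentially the same approach as the paper's own proof: choose an \'etale atlas of $\a_D\otimes\bar{\Q}$ by a scheme, form its \v{C}ech nerve as a hypercovering, apply the comparison Theorem \ref{homdesc} to pass to the analytification, and then invoke Theorem \ref{anaho} together with independence of the classical homotopy type on the choice of hypercovering. The only cosmetic difference is that the paper works with an unspecified \'etale atlas $x\colon X\to\a_D\otimes\bar{\Q}$, whereas you make the concrete (and perfectly legitimate) choice $X=\a_{D,[N]}\otimes\bar{\Q}$ for $N$ large; the logical structure is identical.
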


\begin{proof} The moduli stack ${\a}_{g}\otimes\bar{\Q}$ is a Deligne-Mumford stack, so there exists a 
scheme $X$ over $\bar{\Q}$ and an \'etale surjective morphism
$x: X\rightarrow {\a}_{D}\otimes\bar{\Q}.$
The relative \v{C}ech nerve $\cosk_0^{{\a}_{D}\otimes\bar{\Q}}(X)_{\bullet}$ for
this morphism defines a hypercovering of the stack ${\a}_{D}\otimes\bar{\Q}$.
Similarly, $\cosk_0^{({\a}_{D}\otimes\bar{\Q})^{an}}(X^{an})_{\bullet}$ is a hypercovering of the complex analytic
stack ${\a}_{D}^{an}$. Here $X^{an}$ denotes the associated complex analytic
space of the covering scheme $X$ over $\bar{\Q}$.

The homotopy descent theorem (Theorem \ref{descthm}) now implies that there is a weak equivalences of pro-simplicial sets:
$$\{\a_D\otimes\bar{\Q}\e \simeq \{\cosk_0^{\a_{D}\otimes\bar{\Q}}(X)_{\bullet}\e.$$

The \'etale homotopy type does not change under base change for algebraically closed fields in characteristic zero (cf. \cite[Cor. 12.12]{AM}) and we have:
$$\{\cosk_0^{\a_{D}\otimes\bar{\Q}}(X)_{\bullet}\}^\wedge_{et} \simeq 
\{\cosk_0^{\a_{D}\otimes\bar{\Q}}(X)_{\bullet}\otimes_{\bar{\Q}}\C\}^\wedge_{et}.$$
The comparison theorem for simplicial schemes (cf. Cox \cite[Thm. IV.8]{C}) then implies that we get in fact a weak
equivalence after profinite completions:
$$\{\cosk_0^{\a_{D}\otimes\bar{\Q}}(X)_{\bullet}\otimes_{\bar{\Q}}\C\}^{\wedge}_{et} \simeq 
\Sin(|\cosk_0^{({\a}_{D}\otimes\bar{\Q})^{an}}(X^{an})_{\bullet}|)^\wedge.$$
Because $\cosk_0^{({\a}_{D}\otimes\bar{\Q})^{an}}(X^{an})_{\bullet}$ is a hypercovering of the complex analytic
stack ${\a}_D^{an}$ we have a weak equivalence:
$$\{\a_{D}^{an}\}_{cl} \simeq \Sin(|\cosk_0^{({\a}_{D}\otimes\bar{\Q})^{an}}(X^{an})_{\bullet}|)$$
From the determination of the classical homotopy type of ${\a}_{D}^{an}$ in Theorem \ref{anaho} we get therefore
finally the following weak homotopy equivalences:
$$\{{\a}_{D}\otimes\bar{\Q}\}^{\wedge}_{et}\simeq \{\a_{D}^{an}\}^{\wedge}_{cl}\simeq
 \BB\SP_D(\Z)^{\wedge} =K(\SP_D(\Z), 1)^{\wedge}.$$
The particular case for the moduli stack $\a_g$ follows now immediately from this as $d_g=1.$
\end{proof}

Let us remark that the group $\SP(2g, \Z)$ for $g\geq 2$ is not a good group in the sense of Serre  (cf. \cite[Lemma 3.16]{Mk}, \cite{S}, \cite[Definition 6.5]{AM} and \cite{GJZZ}), which implies, by the theorem of Artin-Mazur \cite[6.6]{AM}, that  the profinite completion $K(\SP(2g, \Z), 1)^{\wedge}$ is {\em not} weakly equivalent to $K(\widehat{\SP(2g, \Z)}, 1)$. The group $\SL(2, \Z)$ though is a good group as it contains a free subgroup of finite index (cf. \cite{S}). On the other hand it is not known in general if the mapping class group $\Map_{g, n}$ of  compact Riemann surfaces of genus $g$ with $n$ punctures is a good group (cf. \cite[3.4]{LS}).

As an immediate consequence we can also determine the \'etale homotopy type of the moduli stack $\m_{ell}$ of elliptic curves 
\cite{DR}, which is simply given as the moduli stack $\a_1$ of principally polarised abelian schemes. In this particular case we have a complex analytic orbifold $\m_{ell}^{an}=[\HH_1/SL(2, \Z)]$, where $\HH_1$ is now just the complex upper half plane. And so with the same arguments as in the proof of Theorem \ref{etho} and the previous remark we get:

\begin{corollary}\label{ellip}
There are weak homotopy equivalences of pro-simplicial sets
$$\{\m_{ell} \otimes\bar{\Q} \}^{\wedge}_{et} \simeq K(\SL (2, \Z), 1)^{\wedge}\simeq K(\widehat{\SL(2, \Z)}, 1).$$
where $\SL (2, \Z)$ is the integral special linear group and $^{\wedge}$ denotes
Artin-Mazur profinite completion.
\end{corollary}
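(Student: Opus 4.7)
The plan is to derive the corollary directly from Theorem \ref{etho} together with the goodness of the group $\SL(2,\Z)$ in the sense of Serre, as invoked in the remark immediately preceding the statement.

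First I would specialise Theorem \ref{etho} to the case $g=1$. Since elliptic curves are canonically principally polarised via the theta divisor, the moduli stack $\m_{ell}$ coincides with $\a_1$, the moduli stack of principally polarised abelian schemes of relative dimension one. Here $D=(1)$ and $\SP_D(\Z)=\SP(2,\Z)=\SL(2,\Z)$, so Theorem \ref{etho} immediately yields the first weak equivalence
$$\{\m_{ell}\otimes\bar{\Q}\}^{\wedge}_{et}\simeq K(\SL(2,\Z),1)^{\wedge}.$$
Concretely, one checks that the complex analytic uniformisation $\m_{ell}^{an}=[\HH_1/\SL(2,\Z)]$ has contractible atlas $\HH_1$ (the upper half plane), so by the argument used in Theorem \ref{anaho} its classical homotopy type is $\BB\SL(2,\Z)$, and then the comparison theorem (Theorem \ref{homdesc}) transports this to the \'etale side after profinite completion.

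For the second weak equivalence I would invoke the Artin-Mazur criterion \cite[6.6]{AM}: if $G$ is a good group in the sense of Serre, then $K(G,1)^{\wedge}\simeq K(\widehat{G},1)$. Thus it suffices to observe that $\SL(2,\Z)$ is good. This is standard: $\SL(2,\Z)$ contains a free subgroup of finite index (for instance the commutator subgroup, or the principal congruence subgroup $\Gamma(2)$, which is free of rank two), and free groups are good, and goodness is preserved under extensions by finite groups; hence $\SL(2,\Z)$ is good. This is the content of the remark just before the corollary and is the reason this case behaves better than the higher genus situation, where $\SP(2g,\Z)$ for $g\geq 2$ fails to be good.

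The only potential obstacle is the verification of goodness, but since this is a well-known fact referenced in \cite{S} and already quoted in the preceding remark, no further work is required. Combining the two weak equivalences yields the chain
$$\{\m_{ell}\otimes\bar{\Q}\}^{\wedge}_{et}\simeq K(\SL(2,\Z),1)^{\wedge}\simeq K(\widehat{\SL(2,\Z)},1),$$
which is the assertion of the corollary.
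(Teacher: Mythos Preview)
Your proposal is correct and follows essentially the same approach as the paper: identify $\m_{ell}$ with $\a_1$, apply Theorem \ref{etho} in the case $g=1$ to obtain the first equivalence, and then use the goodness of $\SL(2,\Z)$ (as noted in the remark preceding the corollary) together with \cite[6.6]{AM} to obtain the second. The paper states the corollary as an immediate consequence of Theorem \ref{etho} and that remark, which is exactly what you have written out in detail.
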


Finally we can also interpret Theorem \ref{etho} and Corollary \ref{ellip} in terms of \'etale fundamental groups of algebraic stacks and get the following:

\begin{corollary}
Let $x$ be any point in the moduli stack ${\a}_{D}\otimes\bar{\Q}$, then we have for the \'etale fundamental group
$$\pi_1^{et}({\a}_{D}\otimes\bar{\Q}, x)\cong \SP_D(\Z)^{\wedge}.$$
Especially for principally polarised schemes we have that
$$\pi_1^{et}({\a}_{g}\otimes\bar{\Q}, x)  =  \prod_{p \ prime}\SP(2g, \Z_p).$$
\end{corollary}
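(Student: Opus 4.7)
The plan is to extract $\pi_1$ from the weak equivalence of pro-simplicial sets established in Theorem \ref{etho} and then, in the principally polarised case, invoke the congruence subgroup property.

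First I would use Theorem \ref{etho}, which gives
$$\{\a_D \otimes\bar{\Q} \}^{\wedge}_{et} \simeq K(\SP_D (\Z), 1)^{\wedge}$$
in the homotopy category of pro-simplicial sets. Applying the fundamental group functor and using that $\pi_1$ commutes with Artin-Mazur profinite completion (see \cite[\S 3, \S 6]{AM}) yields
$$\pi_1\bigl(K(\SP_D(\Z), 1)^{\wedge}, x\bigr) \cong \bigl(\pi_1 K(\SP_D(\Z), 1)\bigr)^{\wedge} = \SP_D(\Z)^{\wedge}.$$
By the definition of the \'etale fundamental group as the $\pi_1$ of the \'etale homotopy type (which is already a profinite group, so unchanged by passing to the profinite completion of the pro-homotopy type), this gives the first isomorphism
$$\pi_1^{et}(\a_D \otimes \bar{\Q}, x) \cong \SP_D(\Z)^{\wedge}.$$
Note that although $\SP_D(\Z)$ fails to be a good group for $g \geq 2$, as recalled in the remark after Theorem \ref{etho}, this only obstructs agreement of higher homotopy groups; on $\pi_1$ the Artin-Mazur completion is harmless.

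For the second statement we specialize to $d_g = 1$ to obtain $\pi_1^{et}(\a_g \otimes \bar{\Q}, x) \cong \SP(2g, \Z)^{\wedge}$, and then we must identify this profinite completion with $\prod_{p \text{ prime}} \SP(2g, \Z_p)$. Here I would invoke the congruence subgroup property for the symplectic group $\SP(2g, \Z)$ with $g \geq 2$, due to Mennicke and Bass-Lazard-Serre: every finite-index subgroup of $\SP(2g, \Z)$ contains some principal congruence subgroup $\Gamma_{2g}(N)$. Consequently the system of finite quotients is cofinal with the system $\{\SP(2g, \Z/N\Z)\}_N$, giving
$$\SP(2g, \Z)^{\wedge} \;=\; \varprojlim_N \SP(2g, \Z/N\Z) \;=\; \SP(2g, \widehat{\Z}) \;=\; \prod_{p \text{ prime}} \SP(2g, \Z_p),$$
where the last equality uses the Chinese remainder theorem together with the representability of $\SP(2g, -)$ as a group scheme over $\Z$.

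The main obstacle is the second step: the identification $\SP(2g, \Z)^{\wedge} = \prod_p \SP(2g, \Z_p)$ genuinely requires the deep congruence subgroup theorem and fails for $g = 1$ (where $\SL(2, \Z)$ admits many non-congruence subgroups). The first step, by contrast, is a formal consequence of Theorem \ref{etho} combined with the Artin-Mazur machinery, which treats $\pi_1$ as a quotient of pro-groups that survives profinite completion without any goodness hypothesis.
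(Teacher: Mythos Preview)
Your proposal is correct and follows essentially the same route as the paper: extract $\pi_1$ from the weak equivalence of Theorem~\ref{etho} via the Artin--Mazur machinery (the paper cites \cite[3.7]{AM} specifically), then in the principally polarised case invoke the congruence subgroup property to identify $\SP(2g,\Z)^{\wedge}$ with $\SP(2g,\widehat{\Z})=\prod_p \SP(2g,\Z_p)$. The only minor point is attribution: for $\SP(2g,\Z)$ the relevant congruence subgroup theorem is due to Bass--Milnor--Serre \cite{BMS} rather than Bass--Lazard--Serre (who, with Mennicke, treated $\SL_n$).
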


\begin{proof}
By Theorem \ref{etho} and by \cite[3.7]{AM}, we conclude that 
$$\pi_1^{et}({\a}_{D}\otimes\bar{\Q}, x)  \cong \SP_D(\Z)^{\wedge}.$$
The affirmative solution of the congruence subgroup problem for the discrete group $\SP(2g, \Z)$ (cf. \cite{BMS}) in addition implies that (cf. also \cite[Lemma 3.16]{Mk})
$$\SP(2g, \Z)^{\wedge} = \SP(2g, \widehat{\Z}) \cong  \prod_{p \ prime}\SP(2g, \Z_p).$$
and so we get the desired description of the \'etale fundamental group for the moduli stack $\a_g$ of principally polarised abelian schemes.
\end{proof}

For the moduli stack of elliptic curves it follows in particular

\begin{corollary}
Let $x$ be a point in the moduli stack $\m_{ell} \otimes\bar{\Q}$, then we have for the \'etale fundamental group
$$\pi_1^{et}(\m_{ell} \otimes\bar{\Q}, x)\cong \SL(2, \Z)^{\wedge}$$
and for the higher \'etale homotopy groups 
$$\pi_n^{et}(\m_{ell} \otimes\bar{\Q}, x)=0$$
for all $n\geq 2$.
\end{corollary}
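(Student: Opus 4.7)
The plan is to deduce both assertions directly from Corollary \ref{ellip}, which already identifies the profinite \'etale homotopy type $\{\m_{ell} \otimes \bar{\Q}\}^{\wedge}_{et}$ with $K(\widehat{\SL(2,\Z)}, 1)$. The essential input has therefore already been established, namely that $\SL(2,\Z)$ is a good group in the sense of Serre because it contains a free subgroup of finite index (the principal congruence subgroup $\Gamma(2)$ is free of rank $2$), so that by the Artin--Mazur theorem \cite[6.6]{AM} one has a weak equivalence $K(\SL(2,\Z),1)^{\wedge} \simeq K(\widehat{\SL(2,\Z)}, 1)$ of pro-simplicial sets.

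For the first assertion, I would invoke the general Artin--Mazur identification \cite[3.7]{AM} of the first pro-homotopy group of a profinitely completed pro-simplicial set with the profinite completion of the ordinary fundamental group of any simplicial model. Applied to $K(\SL(2,\Z),1)^{\wedge}$, whose underlying space has $\pi_1 = \SL(2,\Z)$, this yields
$$\pi_1^{et}(\m_{ell}\otimes\bar{\Q},x) \cong \SL(2,\Z)^{\wedge} \cong \widehat{\SL(2,\Z)}$$
as claimed, mirroring exactly the argument used in the previous corollary for the symplectic case.

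For the second assertion, the goodness-promoted weak equivalence $K(\SL(2,\Z),1)^{\wedge} \simeq K(\widehat{\SL(2,\Z)}, 1)$ provides a genuine Eilenberg--MacLane space as a model for the profinite \'etale homotopy type of $\m_{ell}\otimes\bar{\Q}$. Since an ordinary $K(G,1)$ has trivial $\pi_n$ for $n \geq 2$, the conclusion $\pi_n^{et}(\m_{ell}\otimes\bar{\Q},x) = 0$ for all $n \geq 2$ follows immediately.

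The main obstacle, already essentially surmounted in Corollary \ref{ellip}, is precisely the goodness verification; without it one could only assert that the higher \'etale homotopy pro-groups vanish in some suitably completed sense, which is strictly weaker than the clean vanishing statement above. This subtlety is genuinely genus-specific: as the remark following Theorem \ref{etho} observes, $\SP(2g,\Z)$ is not good for $g\geq 2$, so the identical line of reasoning does not deliver vanishing higher \'etale homotopy groups for the moduli stacks $\a_g$ when $g \geq 2$.
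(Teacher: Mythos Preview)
Your proposal is correct and mirrors exactly the paper's (implicit) argument: the $\pi_1$ statement is the $g=1$ case of the preceding corollary via \cite[3.7]{AM}, and the vanishing of higher $\pi_n^{et}$ follows because Corollary~\ref{ellip} already upgraded $K(\SL(2,\Z),1)^{\wedge}$ to an honest $K(\widehat{\SL(2,\Z)},1)$ using Serre goodness. One small slip: the principal congruence subgroup $\Gamma(2)\subset \SL(2,\Z)$ is not free, since $-I\in\Gamma(2)$; a correct witness is any $\Gamma(N)$ with $N\geq 3$ (or the commutator subgroup), but this is immaterial since the paper already records the goodness of $\SL(2,\Z)$ in the remark preceding Corollary~\ref{ellip}.
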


The Grothendieck exact sequence of \'etale fundamental groups now allows us to relate the \'etale fundamental groups of the moduli stacks ${\a}_{g}\otimes \Q$ with the profinite completions $\SP(2g, \widehat{\Z})$ and with the absolute Galois group $\Gal(\bar{\Q}/\Q)$. 

\begin{corollary}
Let $x$ be any point in the moduli stack ${\a}_{g}\otimes\bar{\Q}$, then there is a short exact sequence of profinite groups
$$1\rightarrow \prod_{p \ prime}\SP(2g, \Z_p)  \rightarrow \pi^{et}_{1}({\a}_g \otimes \Q, x) \rightarrow \Gal(\bar{\Q}/\Q)\rightarrow 1.$$
\end{corollary}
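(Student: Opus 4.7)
The plan is to deduce the short exact sequence from the \'etale homotopy exact sequence applied to the structure morphism $\a_g\otimes\Q\to\Spec(\Q)$, combined with the identification of the geometric \'etale fundamental group already obtained in the previous corollary. For a geometrically connected Deligne-Mumford stack $\X$ of finite type over a field $k$ with algebraic closure $\bar k$, the stacky analogue of Grothendieck's SGA1 exact sequence (see Noohi \cite{No} and Zoonekynd \cite{Z}) reads
$$1\to \pi^{et}_1(\X\otimes_k\bar k,\bar x)\to \pi^{et}_1(\X,x)\to \Gal(\bar k/k)\to 1.$$

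First, I would verify that $\a_g\otimes\Q$ is geometrically connected, i.e.\ that $\a_g\otimes\bar\Q$ is connected. This is immediate from the complex analytic presentation $\a_g^{an}=[\HH_g/\SP(2g,\Z)]$ recalled in Section 1: the Siegel upper half space $\HH_g$ is contractible, hence $\a_g^{an}$ is connected, and invariance of the set of connected components under extension of algebraically closed base fields of characteristic zero (already used in the proof of Theorem \ref{etho}) forces $\a_g\otimes\bar\Q$ to be connected as well.

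Next, I would feed this into the stacky Grothendieck sequence for $\a_g\otimes\Q\to\Spec(\Q)$. Using $\pi^{et}_1(\Spec\Q)=\Gal(\bar\Q/\Q)$ together with the previous corollary's identification
$$\pi^{et}_1(\a_g\otimes\bar\Q,\bar x)\cong \SP(2g,\Z)^{\wedge}\cong \prod_{p\ prime}\SP(2g,\Z_p),$$
the sequence specialises directly to the asserted one.

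The main obstacle is ensuring that the homotopy exact sequence is genuinely available in the Deligne-Mumford setting rather than only for schemes: surjectivity on the right requires geometric connectedness (dispatched above), functoriality of $\pi^{et}_1$ on stacks is clear, but the exactness on the left is the delicate point, as it requires the image of $\pi^{et}_1(\a_g\otimes\bar\Q)$ to be normal in $\pi^{et}_1(\a_g\otimes\Q)$ with the correct quotient. All three points are covered by the stacky formalism of Noohi \cite{No} and Zoonekynd \cite{Z} already invoked in Section 2, so no new input beyond the previous corollary is needed.
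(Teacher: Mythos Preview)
Your proposal is correct and follows essentially the same approach as the paper. The paper does not give a formal proof of this corollary but simply notes beforehand that ``the Grothendieck exact sequence of \'etale fundamental groups now allows us to relate the \'etale fundamental groups of the moduli stacks ${\a}_{g}\otimes \Q$ with the profinite completions $\SP(2g, \widehat{\Z})$ and with the absolute Galois group $\Gal(\bar{\Q}/\Q)$''; your write-up merely spells out the details (geometric connectedness, the stacky version of the sequence via \cite{No}, \cite{Z}) that the paper leaves implicit.
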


To finish this section we determine the \'etale homotopy types of the moduli stacks of polarised abelian schemes with level structures. So let again $g, N\geq 1$ be integers and $D=(d_1, d_2, \ldots d_g)$ be a tuple of positive integers with $d_1|d_2|\ldots |d_g$ and additionally $\gcd(d_g, N)=1$.

First, for the complex analytic stack $\a_{D, [N]}$ we get the following

\begin{theorem}
\label{anahoN} Let $g, N\geq 1$ be integers and $D=(d_1, d_2, \ldots d_g)$ be a tuple of positive integers with $d_1|d_2|\ldots |d_g$ and  $\gcd(d_g, N)=1$. There is a weak homotopy equivalence of simplicial sets 
$$\{\a_{D, [N]}^{an}\}_{cl} \simeq \Sin(|\BB\Gamma_D(N)|).$$
For principal polarisations with level $N$ structures we have in particular that
$$\{\a_{g, [N]}^{an}\}_{cl} \simeq \Sin(|\BB\Gamma_{2g}(N)|).$$
\end{theorem}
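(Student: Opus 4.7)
The plan is to mimic the proof of Theorem \ref{anaho} almost verbatim, substituting the congruence subgroup $\Gamma_D(N) \subset \SP_D(\Z)$ for $\SP_D(\Z)$ throughout. The key input, established in the preceding section, is the complex analytic uniformisation $\a^{an}_{D, [N]} \cong [\HH_g/\Gamma_D(N)]$ together with the fact that $\Gamma_D(N)$ acts properly discontinuously on the Siegel upper half space $\HH_g$.

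First I would take the natural atlas $\pi\colon \HH_g \to \a^{an}_{D, [N]}$ coming from the quotient presentation; this is a local analytic isomorphism and the corresponding principal $\Gamma_D(N)$-bundle is the trivial bundle $\HH_g \times \Gamma_D(N)$ with right action induced by \eqref{action} restricted to $\Gamma_D(N)$. A straightforward induction on the number of factors then identifies the iterated fibre products
\[ \HH_g \times_{\a^{an}_{D, [N]}} \cdots \times_{\a^{an}_{D, [N]}} \HH_g \;\cong\; \HH_g \times \Gamma_D(N) \times \cdots \times \Gamma_D(N),\]
so that the relative \v{C}ech nerve of $\pi$ decomposes as a product of simplicial analytic spaces
\[ \cosk_0^{\a^{an}_{D, [N]}}(\HH_g)_{\bullet} \;\cong\; \HH_g \times \Nn(\Gamma_D(N))_{\bullet},\]
where $\Nn(\Gamma_D(N))_{\bullet}$ is the simplicial nerve of $\Gamma_D(N)$ regarded as a one-object category. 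This is again a hypercovering of the analytic topos $\E_{cl} = \hf\vf((\a^{an}_{D, [N]})_{cl})$.

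Next I would compute the classical homotopy type via the formula recalled in the previous section, namely $\{\a^{an}_{D, [N]}\}_{cl} \simeq \Sin\bigl(|\cosk_0^{\a^{an}_{D, [N]}}(\HH_g)_{\bullet}|\bigr)$. Since $\HH_g \cong \SP(2g,\R)/U(g)$ is contractible, the referee's systematic argument invoked at the end of the proof of Theorem \ref{anaho} applies verbatim: the $\Gamma_D(N)$-equivariant weak equivalence from $\HH_g$ to a point induces, on each simplicial level, a weak equivalence between $\HH_g \times \Gamma_D(N)^{\times n}$ and $\Gamma_D(N)^{\times n}$, hence (by \cite[Chap. IV, Prop. 1.7]{GJ} or \cite[Theorem B.2]{BF}) a weak equivalence of the diagonal singular complexes. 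Taking geometric realisations and applying $\Sin$ yields
\[ \Sin\bigl(|\HH_g \times \Nn(\Gamma_D(N))_{\bullet}|\bigr) \;\simeq\; \Sin\bigl(|\Nn(\Gamma_D(N))_{\bullet}|\bigr) \;=\; \Sin(|\BB\Gamma_D(N)|),\]
which establishes the first assertion. The particular case $d_g = 1$, where $\Gamma_D(N) = \Gamma_{2g}(N)$ is the principal congruence subgroup of $\SP(2g,\Z)$, then follows immediately.

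I do not expect any serious obstacle: every ingredient (the quotient stack presentation, the contractibility of $\HH_g$, the identification of iterated fibre products in a quotient stack, and the homotopy-theoretic machinery of hypercoverings for analytic Deligne--Mumford stacks) has already been assembled in the earlier sections, so the argument is essentially a notational variation on Theorem \ref{anaho}. The only point that would warrant a brief remark is the proper discontinuity of the $\Gamma_D(N)$-action (which ensures that $[\HH_g/\Gamma_D(N)]$ is indeed an analytic Deligne--Mumford stack and that $\pi$ is a legitimate atlas), but this has already been recorded in the discussion preceding the statement.
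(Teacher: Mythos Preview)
Your proposal is correct and takes essentially the same approach as the paper: the paper's proof simply says that the result follows as in the proof of Theorem~\ref{anaho} because $\a_{D,[N]}^{an}$ is the quotient stack $[\HH_g/\Gamma_D(N)]$ (and $[\HH_g/\Gamma_{2g}(N)]$ when $d_g=1$). Your write-up merely spells out in detail the steps that the paper leaves implicit by reference.
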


\begin{proof} This follows as in the proof of Theorem \ref{anaho} because the moduli stack $\a_{D, [N]}^{an}$ is given as the quotient stack
$[\HH_g/\Gamma_D(N)]$ and in the case $d_g=1$ of principal polarisations we have $\a^{an}_{g, [N]}=[\HH_g/\Gamma_{2g}(N)]$.
\end{proof}

From this we can derive now the \'etale homotopy types of the moduli stacks $\a_{D, [N]}$ and $\a_{g, [N]}$.

\begin{theorem}
\label{ethoN}
Let $g, N\geq 1$ be integers and $D=(d_1, d_2, \ldots d_g)$ be a tuple of positive integers with $d_1|d_2|\ldots |d_g$ and  $\gcd(d_g, N)=1$.
There is a weak homotopy equivalence of pro-simplicial sets
$$\{\a_{D, [N]} \otimes\bar{\Q} \}^{\wedge}_{et} \simeq K(\Gamma_D (N), 1)^{\wedge}.$$
where $^{\wedge}$ denotes Artin-Mazur profinite completion.
In particular for principal polarisations with level $N$ structures we have that
$$\{\a_{g, [N]} \otimes\bar{\Q} \}^{\wedge}_{et} \simeq K(\Gamma_ {2g}(N), 1)^{\wedge}.$$
\end{theorem}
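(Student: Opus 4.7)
The plan is to run the argument of Theorem \ref{etho} verbatim, but with $\SP_D(\Z)$ replaced by the congruence subgroup $\Gamma_D(N)$ and with Theorem \ref{anahoN} (already established) providing the classical homotopy type input. In other words, the only nontrivial content needed beyond the principle-of-proof of Theorem \ref{etho} is the observation that the uniformisation $\a_{D,[N]}^{an} = [\HH_g/\Gamma_D(N)]$ has the same formal structure as $\a_D^{an}=[\HH_g/\SP_D(\Z)]$, so that the whole comparison machinery carries across with no modification.

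More concretely, first I would invoke the fact (Theorem from the previous section) that $\a_{D,[N]}\otimes\bar{\Q}$ is a Deligne--Mumford stack, and therefore admits an \'etale surjective atlas $x: X \to \a_{D,[N]}\otimes\bar{\Q}$ by some scheme $X$ of finite type over $\bar{\Q}$. The relative \v{C}ech nerve $\cosk_0^{\a_{D,[N]}\otimes\bar{\Q}}(X)_{\bullet}$ is then a simplicial scheme hypercovering of the small \'etale site of $\a_{D,[N]}\otimes\bar{\Q}$, and the homotopy descent theorem (Theorem \ref{descthm}) yields
$$\{\a_{D,[N]}\otimes\bar{\Q}\}_{et} \simeq \{\cosk_0^{\a_{D,[N]}\otimes\bar{\Q}}(X)_{\bullet}\}_{et}.$$
Next I would appeal to the invariance of \'etale homotopy type under extension of algebraically closed base fields in characteristic zero (\cite[Cor.~12.12]{AM}) to base change to $\C$, and then apply the comparison theorem (Theorem \ref{homdesc}, which is a consequence of Cox's comparison theorem for simplicial schemes) to obtain, after profinite completion,
$$\{\a_{D,[N]}\otimes\bar{\Q}\}^{\wedge}_{et} \simeq \Sin\bigl(\bigl|\cosk_0^{(\a_{D,[N]}\otimes\bar{\Q})^{an}}(X^{an})_{\bullet}\bigr|\bigr)^{\wedge}.$$

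Since $\cosk_0^{(\a_{D,[N]}\otimes\bar{\Q})^{an}}(X^{an})_{\bullet}$ is a hypercovering of the analytic topos of $\a_{D,[N]}^{an}$, its classical realisation computes $\{\a_{D,[N]}^{an}\}_{cl}$. Theorem \ref{anahoN} identifies this with $\Sin(|\BB\Gamma_D(N)|)$, so after profinite completion we get
$$\{\a_{D,[N]}\otimes\bar{\Q}\}^{\wedge}_{et} \simeq \BB\Gamma_D(N)^{\wedge} = K(\Gamma_D(N),1)^{\wedge},$$
which is the desired weak equivalence of pro-simplicial sets. The principally polarised case with level $N$ structure is the specialisation $d_g=1$, $\Gamma_D(N)=\Gamma_{2g}(N)$.

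I do not anticipate a real obstacle: the technical heart of the argument is already contained in Theorem \ref{anahoN}, whose proof (in turn) only uses that $\Gamma_D(N)$ acts properly discontinuously on the contractible Siegel space $\HH_g$ so that the \v{C}ech nerve of $\HH_g\to[\HH_g/\Gamma_D(N)]$ realises to $\HH_g\times\BB\Gamma_D(N)\simeq \BB\Gamma_D(N)$. The one point worth flagging is that, unlike the classical statement, the profinite completion $K(\Gamma_D(N),1)^{\wedge}$ need not coincide with $K(\widehat{\Gamma_D(N)},1)$ since $\Gamma_D(N)$ is in general not a good group in the sense of Serre; this mirrors the cautionary remark made after Theorem \ref{etho} for $\SP(2g,\Z)$, and explains why the statement is phrased at the level of profinite completions of Eilenberg--MacLane spaces rather than of groups.
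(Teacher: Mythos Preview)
Your proposal is correct and follows exactly the approach of the paper, which simply states that the proof is the obvious variation of the proof of Theorem \ref{etho} using Theorem \ref{anahoN} and the comparison theorem for \'etale homotopy types of Deligne--Mumford stacks. Your write-up in fact spells out this variation in more detail than the paper does, and your closing remark about $\Gamma_D(N)$ not being good in the sense of Serre anticipates precisely the cautionary comment the paper makes after the corollaries.
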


\begin{proof}
The proof is the obvious variation of the proof of Theorem \ref{etho} using Theorem \ref{anahoN} and the comparison theorem for
\'etale homotopy types of Deligne-Mumford stacks.
\end{proof}

As a final consequence we state again the natural interpretation in terms of \'etale fundamental groups of algebraic stacks.

\begin{corollary}
Let $x$ be any point in the stack ${\a}_{g, [N]}\otimes\bar{\Q}$, then we have for the \'etale fundamental group:
$$\pi_1^{et}({\a}_{D, [N]}\otimes\bar{\Q}, x)\cong \Gamma_D(N)^{\wedge}$$
Especially for principally polarised schemes we have that
$$\pi_1^{et}({\a}_{g, [N]}\otimes\bar{\Q}, x)  =  \Gamma_{2g} (N)^{\wedge}.$$
\end{corollary}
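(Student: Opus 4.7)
The plan is to deduce the corollary from Theorem~\ref{ethoN} using two standard facts. First, for any pointed connected locally connected topos $(\E, x)$, the Artin-Mazur construction yields a canonical isomorphism between $\pi_1(\{\E\}, x)$ and $\pi_1(\E, x)$ (cf.\ \cite[\S 3]{AM}); applied to the small \'etale topos of $\s = \a_{D,[N]}\otimes\bar{\Q}$ at a chosen geometric base point $x$, this recovers $\pi_1^{et}(\s, x)$ as the fundamental pro-group of $\{\s\}_{et}$. Second, $\pi_1$ commutes with Artin-Mazur profinite completion in the sense that $\pi_1(Y^{\wedge}) \cong \pi_1(Y)^{\wedge}$ for pro-simplicial sets $Y$, and in particular $\pi_1(K(G,1)^{\wedge}) \cong G^{\wedge}$ for any discrete group $G$, directly from the definition of profinite completion as a pro-object over finite quotients.

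Putting these together with the weak equivalence
$$\{\a_{D,[N]}\otimes\bar{\Q}\}^{\wedge}_{et} \simeq K(\Gamma_D(N), 1)^{\wedge}$$
supplied by Theorem~\ref{ethoN} immediately yields $\pi_1^{et}(\a_{D,[N]}\otimes\bar{\Q}, x) \cong \Gamma_D(N)^{\wedge}$. The principally polarised case with $d_g = 1$ then follows at once, since in that setting $\Gamma_D(N)$ specialises to $\Gamma_{2g}(N)$ by definition.

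The main subtlety worth flagging is the issue of connectedness: the moduli stack $\a_{D,[N]}\otimes\bar{\Q}$ can split into $\varphi(N)$ connected components indexed by primitive $N$-th roots of unity, so a choice of base point $x$ picks out one such component. This matches the implicit setup of Theorem~\ref{ethoN}, whose complex analytic model $[\HH_g/\Gamma_D(N)]$ is connected, and under which the proof via the comparison theorem is carried out componentwise. In contrast with the remark after Theorem~\ref{etho} on $\SP(2g,\Z)$ and Serre's notion of a good group, no goodness hypothesis on $\Gamma_D(N)$ is needed here: goodness is only relevant for the higher \'etale pro-groups $\pi_n^{et}$ with $n \geq 2$, whereas $\pi_1$ of the Artin-Mazur profinite completion of $K(G,1)$ is always canonically $G^{\wedge}$.
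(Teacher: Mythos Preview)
Your proposal is correct and follows essentially the same approach as the paper: the paper gives no explicit proof for this corollary, but the analogous earlier corollary (for $\a_D$) is proved in one line by invoking the \'etale homotopy type theorem together with \cite[3.7]{AM}, which is precisely the ``$\pi_1$ of the profinite completion equals the profinite completion of $\pi_1$'' fact you spell out. Your additional remarks on the $\varphi(N)$ connected components and on why Serre-goodness is irrelevant for $\pi_1$ are accurate elaborations that the paper leaves implicit.
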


Let us also remark that we have the following short exact sequence of \'etale fundamental groups.

\begin{corollary}
Let $x$ be any point in the
stack ${\a}_{g, [N]}\otimes\bar{\Q}$, then there is a short exact sequence of profinite groups
$$1\rightarrow \Gamma_{2g}(N)^{\wedge}  \rightarrow \pi^{et}_{1}({\a}_{g , [N]} \otimes \Q, x) \rightarrow \Gal(\bar{\Q}/\Q)\rightarrow 1.$$
\end{corollary}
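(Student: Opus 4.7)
The plan is to deduce the exact sequence by invoking Grothendieck's fundamental homotopy exact sequence of \'etale fundamental groups for algebraic Deligne-Mumford stacks, and then substituting the identification of the geometric fundamental group from the preceding corollary. More precisely, for a geometrically connected quasi-compact Deligne-Mumford stack $\s$ of finite type over a field $k$ with geometric point $\bar{x}$, the general theory developed by Noohi \cite{No} and Zoonekynd \cite{Z} and cited earlier in the text produces a short exact sequence of profinite groups
$$1 \to \pi_1^{et}(\s \otimes_k \bar{k}, \bar{x}) \to \pi_1^{et}(\s, \bar{x}) \to \Gal(\bar{k}/k) \to 1,$$
which generalises Grothendieck's classical exact sequence for varieties over $k$.

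First I would apply this sequence to $\s = \a_{g,[N]}\otimes \Q$, regarded as a smooth Deligne-Mumford stack of finite type over $\Spec(\Q)$, together with the geometric point $x$ appearing in the statement. Then I would plug in the identification $\pi_1^{et}(\a_{g,[N]} \otimes \bar{\Q}, x) \cong \Gamma_{2g}(N)^{\wedge}$ of the previous corollary (itself a direct consequence of Theorem \ref{ethoN}) to obtain the desired short exact sequence. Exactness at the middle and right terms is then formal, as $\Gal(\bar{\Q}/\Q)$ arises as the \'etale fundamental group of $\Spec(\Q)$ based at the chosen geometric point.

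The main obstacle is the geometric connectedness hypothesis: as recalled in Section~1, the geometric fibres of $\a_{g,[N]}$ over $\Spec(\Z[N^{-1}])$ decompose into $\varphi(N)$ irreducible components indexed by the primitive $N$-th roots of unity, so the stack $\a_{g,[N]}\otimes \Q$ is typically not geometrically connected for $N \geq 3$. One therefore has to work component-wise: either restrict to the geometrically connected stack $\a'_{g,[N]}$ over $\Spec(\Z[\zeta_N, N^{-1}])$ and then splice the resulting short exact sequence with the cyclotomic extension sequence
$$1 \to \Gal(\bar{\Q}/\Q(\zeta_N)) \to \Gal(\bar{\Q}/\Q) \to (\Z/N\Z)^{*} \to 1,$$
or, equivalently, pass to the connected component of $\a_{g,[N]}\otimes \Q$ containing $x$. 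Once this bookkeeping with components and cyclotomic Galois groups is carried out, the rest of the argument is a direct application of the general homotopy exact sequence together with the preceding corollary.
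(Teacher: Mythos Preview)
Your approach is exactly the one the paper has in mind: the corollary is stated without proof, and the only justification offered (in the parallel case of $\a_g$ a few lines earlier) is an appeal to ``the Grothendieck exact sequence of \'etale fundamental groups'' together with the preceding computation of the geometric fundamental group. Your observation about geometric connectedness is well taken and is a genuine subtlety the paper does not address; the authors seem to be implicitly working with $\a'_{g,[N]}$ over $\Q(\zeta_N)$ or with a single component, as you suggest.
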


Again it turns out that the groups $\Gamma_{2g}(N)$ are not good groups either and so the profinite completion $K(\Gamma_{2g}(N), 1)^{\wedge}$ is {\em not} weakly equivalent to $K(\widehat{\Gamma_{2g} (N)}, 1)$ (cf. \cite{S}, \cite{GJZZ}).

\section{The Torelli Morphism and \'Etale homotopy Types}

We can also compare the \'etale homotopy type of the moduli stack $\a_g$ with that of the moduli stack $\m_g$ of algebraic curves of genus $g$, which was determined by Oda \cite{O}. The relation between the two algebraic stacks is given by the Torelli morphism (cf. \cite[Sec. 7.4]{MFK}, \cite[1.2, 1.3]{MO}, \cite{Vi}). Let $g$ be an integer with $g\geq 2$. Given a family of algebraic curves $\pi:C\rightarrow U$ of genus $g$ we can associate to it its Jacobian given as the relative Picard scheme
$J(C/U)=\Pic^0(C/U)$, which is an abelian scheme over $U$ of relative dimension $g$. There exists a principal polarisation 
$$\vartheta: J(C/U)\rightarrow \hat{J}(C/U)=\Pic^0(J(C)/U)$$
induced by the $\Theta$-divisor. This induces a functor
$$j(U): \m_g(U)\rightarrow \a_g(U), \,\, C/U\mapsto (J(C/U), \vartheta)$$
between the category of sections for every object $U$ in the category of $S$-schemes $(Sch/S)$. Therefore we get a morphism between algebraic stacks, the {\em Torelli morphism}
$$j: \m_g\rightarrow \a_g.$$ 
Torelli's theorem says that for every algebraically closed field $k$ the morphism  $j$ is injective on $k$-valued points (cf. \cite[1.2, 1.3]{MO}). 

Oda \cite{O} showed that the \'etale homotopy type of $\m_g$ is given as the profinite completion of the classifying space of the mapping class group $\Map_g$ of compact Riemann surfaces of genus $g$, i.e. we have a weak homotopy equivalence of pro-simplicial sets 
$$\{\m_g \otimes\bar{\Q} \}^{\wedge}_{et} \simeq K(\Map_g, 1)^{\wedge}.$$

Therefore we get a morphism between pro-simplicial sets induced by the Torelli morphism $j$
$$\{\m_g \otimes\bar{\Q} \}^{\wedge}_{et}\rightarrow \{\a_g \otimes\bar{\Q} \}^{\wedge}_{et}$$
and for the \'etale fundamental groups a morphism between short exact sequences of profinite groups
$$\diagram
1\ar[r] &\Map_g^{\wedge}\ar[r]\ar[d] &\pi_1^{et}(\m_g\otimes \Q, x)\ar[r]\ar[d] &\Gal(\bar{\Q}/\Q) \ar[r]\ar@{=}[d] &1\\
1\ar[r] & \SP(2g, \Z)^{\wedge}\ar[r] &\pi_1^{et}(\a_g\otimes \Q, j(x))\ar[r] &\Gal(\bar{\Q}/\Q) \ar[r] & 1
\enddiagram
$$
where the existence of the first short exact sequence is a direct consequence of the result of Oda.

We can also consider the moduli stack $\m_{g,[N]}$ of smooth algebraic curves of genus $g$ endowed with a level $N$ structure, $N \geq 3$,  where a level $N$ structure on an algebraic curve $C/S$ is a level $N$ structure on the Jacobian variety $J(C/S)$ (cf. \cite{MFK}, \cite{BoP}).

We also have a Torelli morphism 
$$j_N:  \m_{g,[N]} \rightarrow \a_{g, [N]}$$
If $g \geq 3$, the Torelli morphism $J_N$  is $2:1$ on its image outside the hyperelliptic locus and it is ramified on the hyperelliptic locus (cf. \cite{OS}). 
In fact on the Jacobian of a curve one has the automorphism given by multiplication by $-1$ which does not lift to an automorphism of the curve, unless the curve $C$ is hyperelliptic. Indeed in this case one has the hyperelliptic involution on $C$ which acts as -1 on the homology group $H_1(C, \Z/N\Z)$. 

Recall that for $N \geq 3$, the moduli stacks $ \m_{g,[N]}$ are in fact smooth quasi-projective schemes over $\Spec(\Z[N^{-1}])$ (cf. \cite{S1}).

Assume $n \geq 3$, its complex analytification  $ \m_{g,[N]}^{an}$ has the structure of a complex analytic orbifold and indeed of a smooth complex manifold. In fact, recall that the principal polarisation on the Jacobian of a compact Riemann surface $C_g$ of genus $g$ is given by the cup product on the cohomology $H^1(C_g, \Z)$. So if we denote as above by $\Map_g$ the mapping class group, we  have a natural morphism $t:  \Map_g \rightarrow \SP(2g, \Z)$, and also a morphism  $t_N: \Map_g \rightarrow \SP(2g, \Z/N\Z)$. Denote by $\RR_g$ and $\RR_g(N)$ the kernels of the morphisms $t$ and $t_N$ respectively. They both act properly discontinuously on the Teichm\"uller space $\T_g$ and the quotient ${\T}or_g= \T_g/\RR_g$ is called the {\it Torelli space}, while the quotient $\T_g/\RR_g(N)$  is the moduli space of compact Riemann surfaces of genus $g$ with a level $N$ structure and we have a complex analytic orbifold structure on $\m_{g,[N]}^{an}= [\T_g/\RR_g(N)]$ (cf. \cite[Ch. 16]{ACG}, \cite{BoP}). Moreover we have the following diagram for the Torelli morphisms:
$$
\begin{array}{ccc}
\T_g & & \\
\downarrow & & \\
 {\T}or_g & \stackrel{j_{{\T}or}}\rightarrow& \HH_g \\
 \downarrow & & \downarrow\\
\m_{g,[N]}^{an}& \stackrel{j_{N}}\rightarrow& \a_{g, [N]}^{an}  \\
  \downarrow & & \downarrow \\
 \m_g^{an}
 & \stackrel{j}\rightarrow& \a_g^{an} \\
\end{array}
$$
where $j$ is injective, while $j_{{\T}or}$ and $j_N$ are $2:1$ on their respective images and ramified on the hyperelliptic locus (cf. \cite{OS} and also \cite[Ch. 3]{Col} for a nice survey on the general Torelli morphisms). 

It follows in the same way as Oda's result for $\m_g$ that the \'etale homotopy type of the moduli stack $\m_{g, [N]}$ is given as the profinite completion of the classifying space of the discrete group $\RR_{g}(N)$, i.e. we have a weak homotopy equivalence of pro-simplicial sets 
$$\{\m_{g, [N]} \otimes\bar{\Q} \}^{\wedge}_{et} \simeq K(\RR_{g}(N), 1)^{\wedge}.$$
This follows readily from the fact that $\m_{g,[N]}^{an}= [\T_g/\RR_g(N)]$ and that the Teichm\"uller space $\T_g$ is again contractible by using the comparison theorem (Theorem \ref{homdesc}) for homotopy types.

And so we get again a morphism between pro-simplicial sets induced by the Torelli morphism $j_N$, which preserves the level $N$ structures
$$\{\m_{g, [N]} \otimes\bar{\Q} \}^{\wedge}_{et}\rightarrow \{\a_{g, [N]} \otimes\bar{\Q} \}^{\wedge}_{et}$$
and for the \'etale fundamental groups a morphism between short exact sequences of profinite groups
$$\diagram
1\ar[r] &\RR_{g}(N)^{\wedge}\ar[r]\ar[d] &\pi_1^{et}(\m_{g, [N]}\otimes \Q, x)\ar[r]\ar[d] &\Gal(\bar{\Q}/\Q) \ar[r]\ar@{=}[d] &1\\
1\ar[r] & \Gamma_{2g}(N)^{\wedge}\ar[r] &\pi_1^{et}(\a_{g, [N]}\otimes \Q, j_N(x))\ar[r] &\Gal(\bar{\Q}/\Q) \ar[r] & 1
\enddiagram
$$
using the calculations of \'etale homotopy types of the moduli stacks and the short exact sequences for the respective \'etale fundamental groups. 

These observations now allow to study algebro-geometric properties of the Torelli morphisms and questions related to the Schottky problem of characterising the locus of Jacobians among principally polarised abelian varieties in terms of \'etale homotopy types of the associated morphism between algebraic stacks. This will be a theme in a follow-up article.\\

{\it Acknowledgements:} The first author was partially supported by PRIN 2012 MIUR ``Moduli, strutture geometriche e loro applicazioni'', by FIRB 2012 ``Moduli spaces and applications'' and by INdAM (GNSAGA). The second author was partially supported  by INdAM (GNSAGA) and would like to thank the University of Pavia for the kind invitation and hospitality. He also likes to thank the Tata Institute of Fundamental Research (TIFR) in Mumbai, the Centro de Investigaci\'on en Matem\'aticas (CIMAT) in Guanajuato and the University of Leicester for additional support. Finally both authors are very grateful to the referee for suggesting several improvements.

\end{document}